\newtheorem{theorem}{Theorem}[section]
\newtheorem{lemma}[theorem]{Lemma}
\newtheorem{remark}[theorem]{Remark}
\newtheorem{definition}[theorem]{Definition}
\def\qedbox{\hbox{$\rlap{$\sqcap$}\sqcup$}}
\begin{document}

\title[ Commuting curvature operators]
{ Examples of signature $(2,2)$ manifolds with commuting curvature operators}%
\author[Brozos-V\'azquez et. al.]
{M. Brozos-V\'azquez,
  E. Garc\'{\i}a--R\'{\i}o, P. Gilkey
 and R. V\'azquez-Lorenzo}
\begin{address}{B-V, G-R, V-L: Department of Geometry and Topology, Faculty of Mathematics,
University of Santiago de Compostela, Santiago de Compostela 15782, Spain.}\end{address}
\begin{email}{mbrozos@usc.es, xtedugr@usc.es, ravazlor@usc.es}\end{email}
\begin{address}{G: Mathematics Department, University of Oregon, Eugene Or 97403, USA}
\end{address}\begin{email}{gilkey@uoregon.edu}\end{email}

\begin{abstract}
 We exhibit Walker manifolds of signature $(2,2)$ with various commutativity properties for the
Ricci operator, the skew-symmetric curvature operator, and the Jacobi operator. If the Walker metric is
a Riemannian extension of an underlying affine structure $\mathcal{A}$, these properties are related to
the Ricci tensor of $\mathcal{A}$.
\end{abstract}

\keywords{anti-self-dual, conformal Osserman, Einstein, Osserman, Ricci operator, self-dual,
curvature--curvature commuting,  curvature--Jacobi commuting,
curvature--Ricci commuting,
Jacobi--Jacobi commuting, Jacobi--Ricci commuting,
Walker manifold.
\newline 2000 {\it Mathematics Subject Classification.} 53C20.}
\maketitle

\section{Introduction}\label{sect-1}

Let $\mathcal{M}:=(M,g)$ be a pseudo-Riemannian manifold of
signature $(p,q)$, let $\nabla$ be the associated Levi-Civita
connection, and let
$\mathcal{R}(x,y):=\nabla_x\nabla_y-\nabla_y\nabla_x-\nabla_{[x,y]}$
be the curvature operator. With our sign convention, the {\it Jacobi
operator} is given by
$\mathcal{J}(x):y\rightarrow\mathcal{R}(y,x)x$. Let $\mathcal{\rho}$
be the associated Ricci operator; $g(\rho
x,x)=\operatorname{Tr}\{\mathcal{J}(x)\}$. We shall study relations
between algebraic properties of the curvature operator and the
underlying geometry of the manifold. Commutativity conditions
of curvature operators have been considered extensively in the study
of submanifolds (see for example \cite{Ortega, Ts05}), hence it is natural to look at them from a broader
intrinsic point of view.

\begin{definition}\label{defn-1.1}
$\mathcal{M}$ is said to be:\begin{enumerate}
\item {\it Einstein} if $\rho$ is a scalar multiple of the identity.
\item {\it  Jacobi--Ricci commuting} if
$\mathcal{J}(x)\rho=\rho\mathcal{J}(x)$ $\forall$ $x$.
\item {\it  curvature--Ricci commuting} if $\mathcal{R}(x,y)\rho=\rho\mathcal{R}(x,y)$ $\forall$
$x,y$.
\item {\it  Jacobi--Jacobi commuting} if
$\mathcal{J}(x)\mathcal{J}(y)=\mathcal{J}(y)\mathcal{J}(x)$ $\forall$ $x,y$.
\item {\it  curvature--Jacobi commuting} if
$\mathcal{J}(x)\mathcal{R}(y,z)=\mathcal{R}(y,z)\mathcal{J}(x)$ $\forall$ $x,y,z$.
\item {\it  curvature--curvature commuting} if
$\mathcal{R}(w,x)\mathcal{R}(y,z)=\mathcal{R}(y,z)\mathcal{R}(w,x)$\newline $\forall$ $w,x,y,z$.
\end{enumerate}\end{definition}

Commutativity properties of the skew-symmetric curvature operator and of the Jacobi operator were first
studied in the Riemannian setting by Tsankov
\cite{Ts05}. He showed that if $\mathcal{M}$ is a hypersurface in
$\mathbb{R}^{m+1}$ with
$\mathcal{J}(x)\mathcal{J}(y)=\mathcal{J}(y)\mathcal{J}(x)$ for
all $x\bot y$, then necessarily $\mathcal{M}$ had constant
sectional curvature; this result was subsequently extended to the
general Riemannian context in \cite{BGa05} and additional results
obtained in the general pseudo-Riemannian setting in
\cite{BGb06,BGNb05,IZV}. Tsankov also derived results relating to
hypersurfaces where
$\mathcal{R}(w,x)\mathcal{R}(y,z)=\mathcal{R}(y,z)\mathcal{R}(w,x)$.
Videv studied manifolds where $\rho
\mathcal{J}(x)=\mathcal{J}(x)\rho$ for all $x$. Many of these investigations were
originally suggested by Stanilov \cite{S1,S2}.

The conditions in Definition \ref{defn-1.1} have also been described elsewhere in the literature as ``Jacobi--Videv'',
``skew--Videv'', ``Jacobi--Tsankov'', ``mixed--Tsankov'', and ``skew--Tsankov'', respectively
and the general field of investigation of such conditions is often referred to as
Stanilov--Tsankov--Videv theory.
The  curvature--Ricci commuting condition has also been denoted as
``Ricci semi-symmetric''; it is a generalization of the semi-symmetric
condition (see \cite{AD} and the references therein).
Semi-symmetric manifolds of conullity two are curvature--curvature commuting \cite{BGb07, BKV}.
We have chosen
to change the notation from that employed previously to put   these conditions in parallel as much as possible.

In this paper, we shall exhibit families of manifolds having some, but not necessarily all, of
these properties in order to obtain insight into relationships between these concepts and the
underlying geometry. We shall work with a restricted class of Walker manifolds of signature $(2,2)$;
this class is both sufficiently rich to offer a significant number of examples and sufficiently
restricted to permit a relatively complete analysis.
We have not attempted to obtain the most general possible
classification results for Walker signature $(2,2)$ manifolds as our experience in similar related
problems is that these tend to be excessively technical; for example, there is as yet no classification
of Einstein Walker signature $(2,2)$ manifolds and there is as yet no classification of anti-self-dual
Walker signature $(2,2)$ manifolds. As the family we shall examine has been studied extensively in other contexts
\cite{BGVa05,BGGV06}, we can also relate curvature commutativity properties for these manifolds to other properties such as Einstein,
self-dual, anti-self-dual and Osserman.

Walker \cite{W} studied pseudo-Riemannian manifolds with a parallel field of null planes and derived a canonical
form. Lorentzian Walker metrics have been studied extensively in the physics
literature since they constitute the background metric of the pp-wave models (\cite{ABRS, K, KNP, MW} to list a
few of the many possible references; the literature is a vast one); a pp-wave spacetime admits a covariantly
constant null vector field $U$ and therefore it is trivially recurrent (i.e., $\nabla U=\omega\otimes U$ for some one-form $\omega$).
Lorentzian Walker metrics present many specific features both from the physical and geometric viewpoints  \cite{Calvaruso, CGRVA, Leistner, PPCM}. We also refer to related work of Hall \cite{H91} and of Hall and
 da Costa \cite{HS81} for generalized Lorentzian Walker manifolds (i.e. for spacetimes admitting a
nonzero vector field $n^a$ satisfying $R_{ijkl}n^l=0$ or admitting a rank $2$-symmetric or
anti-symmetric tensor $H_{ab}$ with $\nabla H=0$).

One says that a pseudo-Riemannian manifold $\mathcal{M}$ of signature $(2,2)$ is a {\it Walker
manifold} if it admits a parallel totally isotropic $2$-plane field; see \cite{CGM05, M05} for further
details. Such a manifold is locally isometric to an example of the following form:
$\mathcal{M}:=(\mathcal{O},g)$ where $\mathcal{O}$ is an open subset of $\mathbb{R}^4$ and where the metric is
given by:
\begin{equation}\label{eqn-1.a}
\begin{array}{l}
g(\partial_{x_1},\partial_{x_3})=g(\partial_{x_2},\partial_{x_4})=1,\\
g(\partial_{x_i},\partial_{x_j})=g_{ij}(x_1,x_2,x_3,x_4)\text{ for }i,j=3,4;
\end{array}\end{equation}
here $(x_1,x_2,x_3,x_4)$ are coordinates on $\mathbb{R}^4$.  In this paper, we shall examine the
concepts of Definition \ref{defn-1.1} for a restricted category of signature $(2,2)$ Walker
metrics where we set $g_{33}=g_{44}=0$:
\begin{equation}\label{eqn-1.b}
g(\partial_{x_1},\partial_{x_3})=g(\partial_{x_2},\partial_{x_4})=1,\quad
g(\partial_{x_3},\partial_{x_4})=g_{34}(x_1,x_2,x_3,x_4)\,.
\end{equation}

Let
$dx_1dx_2dx_3dx_4$ orient $\mathbb{R}^4$.
 The study of self-dual and anti-self-dual metrics is crucial in
Lorentzian geometry, see, for example, \cite{ABP05, HSW}. The same is true in the higher signature
context \cite{BS, DM, DGV2, HSW, LPS}. One says that $\mathcal{M}$ is {\it Osserman} if
the spectrum of the Jacobi operator is constant on the pseudo-sphere of unit spacelike vectors  or,
equivalently, on the pseudo-sphere of unit timelike vectors. The notion of {\it conformally Osserman} is
defined using the conformal Jacobi operator. One has that $\mathcal{M}$ is conformally Osserman
$\Leftrightarrow$ $\mathcal{M}$ is either self-dual or anti-self-dual \cite{BGVa05}.   If
$f=f(x_1,x_2,x_3,x_4)$, let $f_{/i}:=\partial_{x_i}f$ and let $f_{/ij}:=\partial_{x_i}\partial_{x_j}f$.
One has the following surprising result:

\begin{theorem}\label{thm-1.3}
Let the metric be as in Equation (\ref{eqn-1.b}). Then
\begin{enumerate}
\item $\mathcal{M}$ is self-dual $\Leftrightarrow$
$g_{34}=x_1p(x_3,x_4)+x_2q(x_3,x_4)+s(x_3,x_4)$.
\item $\mathcal{M}$ is anti-self-dual $\Leftrightarrow$
$g_{34}=x_1p(x_3,x_4)+x_2q(x_3,x_4)+s(x_3,x_4)$ $+\xi(x_1,x_4)$ $+\eta(x_2,x_3)$ with
$p_{/3}=q_{/4}$ and $g_{34}p_{/3}-x_1p_{/34}-x_2p_{/33}-s_{/34}=0$.
\item The following assertions are equivalent:
\begin{enumerate}
\item $\mathcal{M}$ is Osserman.
\item $\mathcal{M}$ is Einstein.
\item $g_{34}=x_1p(x_3,x_4)+x_2q(x_3,x_4)+s(x_3,x_4)$ where $p^2=2p_{/4}$, $q^2=2q_{/3}$, and
$pq=p_{/3}+q_{/4}$.
\item $\rho=0$.
\end{enumerate}\end{enumerate}
\end{theorem}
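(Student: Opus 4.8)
The plan is to reduce every assertion to a finite calculation in the single function $g_{34}$ and then to integrate the resulting partial differential equations. First I would record the nonzero components of the inverse metric — here $g^{13}=g^{24}=1$, $g^{12}=g^{21}=-g_{34}$, and the rest zero — and use them to list the Christoffel symbols $\Gamma_{ij}^k$; since $g_{34}=g_{43}$ is the only non-constant entry, every Christoffel symbol is a first partial derivative $g_{34/\ell}$ contracted against $g^{\bullet\bullet}$. From these I would assemble the nonzero curvature components $R_{ijk\ell}$, which are linear in the second derivatives $g_{34/ij}$ together with quadratic correction terms in the first derivatives. All of the subsequent objects — the Ricci operator $\rho$, the Weyl operator, and the Jacobi operator $\mathcal{J}$ — are algebraic in this curvature, so the entire theorem becomes a problem about a PDE system for $g_{34}$.

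For parts (1) and (2) I would use the fact that in signature $(2,2)$ the Hodge operator satisfies $*^2=1$ on $\Lambda^2$, giving the splitting $\Lambda^2=\Lambda^2_+\oplus\Lambda^2_-$ into the $\pm1$ eigenspaces and a corresponding block decomposition $\mathcal{W}=\mathcal{W}^+\oplus\mathcal{W}^-$ of the Weyl curvature operator. Self-duality is $\mathcal{W}^-=0$ and anti-self-duality is $\mathcal{W}^+=0$. After fixing the orientation $dx_1dx_2dx_3dx_4$ and an explicit basis of $\Lambda^2_\pm$, each condition becomes a system of PDEs in $g_{34}$. I expect the self-dual system to integrate immediately and force $g_{34}$ to be affine in $(x_1,x_2)$ with coefficients depending only on $(x_3,x_4)$, which is the stated form. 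The anti-self-dual system is the delicate one: after imposing $\mathcal{W}^+=0$ I expect the general solution to pick up the two extra free functions $\xi(x_1,x_4)$ and $\eta(x_2,x_3)$, together with the integrability constraints $p_{/3}=q_{/4}$ and $g_{34}p_{/3}-x_1p_{/34}-x_2p_{/33}-s_{/34}=0$.

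For part (3) I would argue by a short chain of implications. That (a) implies (b) is the general fact that an Osserman manifold is Einstein: the trace of $\mathcal{J}(x)$ equals $g(\rho x,x)$, and constancy of the spectrum on unit spacelike vectors forces $\rho=\lambda\,\mathrm{id}$ by homogeneity and polarization. A direct computation should show that the scalar curvature of every metric of the form (\ref{eqn-1.b}) vanishes, so the Einstein constant $\lambda=\tau/4$ is zero; this gives the equivalence of (b) and (d), and it is the source of the ``surprising'' collapse of Einstein to Ricci--flat. To obtain (c) I would write out $\rho$ explicitly and set it to zero: the structural components force the self-dual form $g_{34}=x_1p+x_2q+s$ of part (1), and the remaining components reduce precisely to $p^2=2p_{/4}$, $q^2=2q_{/3}$, and $pq=p_{/3}+q_{/4}$. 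Finally, to close the loop back at (a), I would invoke the conformal Osserman characterization quoted above: condition (c) makes $\mathcal{M}$ self-dual, hence conformally Osserman, and an Einstein conformally Osserman four--manifold is Osserman.

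The main obstacle is the integration of the duality systems, and especially the anti-self-dual case of part (2), where one must correctly identify which combinations of derivatives of $g_{34}$ remain unconstrained (producing $\xi$ and $\eta$) and which survive as the two constraint equations. The curvature bookkeeping is mechanical but voluminous, so I would organize it using the symmetry $x_1\leftrightarrow x_2$, $x_3\leftrightarrow x_4$ of the ansatz to halve the work, and I would verify the vanishing of $\tau$ independently, since the whole equivalence in part (3) rests on it.
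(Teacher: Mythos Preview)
Your approach is correct and essentially mirrors the paper's. For (1) and (2) the paper does not carry out the Weyl computation at all but simply cites \cite{BGVa05}; you propose to do it directly, which is fine and you rightly flag the anti-self-dual integration as the laborious step. For (3) your chain of implications is the paper's: the paper first shows (Section~\ref{sect-3}) that Einstein forces $g_{34}$ to be affine in $(x_1,x_2)$ by reading off $\rho_{21}=\tfrac12 g_{34/11}$, $\rho_{12}=\tfrac12 g_{34/22}$, and then a coefficient of $\rho_{13}$, exactly your ``structural components''; it then displays $\rho$ in the affine case as a strictly upper-triangular matrix, giving (b)$\Leftrightarrow$(c)$\Leftrightarrow$(d), and closes the loop via self-dual $+$ Ricci-flat $\Rightarrow$ Osserman, just as you do. Your separate observation that $\tau\equiv 0$ is a clean way to phrase the collapse (b)$\Rightarrow$(d), equivalent to the paper's reading it off from the triangular form of $\rho$.
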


We emphasize that it is a crucial feature of these examples that Ricci flat, Einstein, and Osserman are
equivalent conditions; this is not the case, of course, for general Walker metrics of signature $(2,2)$.

The conditions on $p$ and $q$ which are given in Assertion (3c) of Theorem \ref{thm-1.3} will play
an important role in what follows. The following is a useful technical result that will be central
in our discussions:

\begin{lemma}\label{lem-1.4}
Let $\mathcal{O}$ be an open connected subset of $\mathbb{R}^4$. Let
$p,q\in C^\infty(\mathcal{O})$ be functions only of $(x_3,x_4)$. Then the
following conditions are equivalent:
\begin{enumerate}
\item $p^2=2p_{/4}$, $q^2=2q_{/3}$, and $pq=p_{/3}+q_{/4}$.
\item $p^2=2p_{/4}$, $q^2=2q_{/3}$, and $p_{/3}=q_{/4}=\frac12pq$.
\item There exist $(a_0,a_3,a_4)\in\mathbb{R}^3-\{0\}$ so that
$p=-2a_4(a_0+a_3x_3+a_4x_4)^{-1}$ and $q=-2a_3(a_0+a_3x_3+a_4x_4)^{-1}$.
\end{enumerate}
\end{lemma}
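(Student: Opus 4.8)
The plan is to prove the cycle $(1)\Rightarrow(2)\Rightarrow(3)\Rightarrow(2)\Rightarrow(1)$ after disposing of the two easy links. The implication $(2)\Rightarrow(1)$ is immediate, since $p_{/3}=q_{/4}=\frac12 pq$ gives $p_{/3}+q_{/4}=pq$. The implication $(3)\Rightarrow(2)$ is a direct substitution: setting $\Phi:=a_0+a_3x_3+a_4x_4$, one computes $p_{/4}=2a_4^2\Phi^{-2}=\frac12 p^2$, $q_{/3}=2a_3^2\Phi^{-2}=\frac12 q^2$, and $p_{/3}=q_{/4}=2a_3a_4\Phi^{-2}=\frac12 pq$. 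The substance therefore lies in $(1)\Rightarrow(2)$ and $(2)\Rightarrow(3)$.

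For $(1)\Rightarrow(2)$ it suffices to show $p_{/3}=q_{/4}$, for then the third equation of $(1)$ forces both to equal $\frac12 pq$. Differentiating $p^2=2p_{/4}$ in $x_3$ and $q^2=2q_{/3}$ in $x_4$ gives the mixed relations $p_{/34}=pp_{/3}$ and $q_{/34}=qq_{/4}$. Setting $w:=p_{/3}-q_{/4}$ and differentiating $pq=p_{/3}+q_{/4}$, a short calculation using these relations (and $p_{/3}=\frac12(pq+w)$, $q_{/4}=\frac12(pq-w)$) produces the homogeneous system
\begin{equation*}
w_{/3}=\tfrac{3}{2}qw,\qquad w_{/4}=\tfrac{3}{2}pw.
\end{equation*}
I expect this to be the main obstacle, because such a first-order linear system does not by itself force $w\equiv0$; its solutions vanish either everywhere or nowhere. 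The decisive step is the integrability condition $w_{/34}=w_{/43}$: expanding both sides from the display, the common term $\frac94 pqw$ cancels and one is left with $\frac32(q_{/4}-p_{/3})w=0$; since $q_{/4}-p_{/3}=-w$, this gives $w^2=0$, so $w\equiv0$, which is exactly $(2)$.

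For $(2)\Rightarrow(3)$ I would integrate the equations. In the variables $(x_3,x_4)$ the right-hand sides $\frac12 p\,(q,p)$ and $\frac12 q\,(q,p)$ for the gradients of $p$ and $q$ vanish wherever $p$, respectively $q$, vanishes; hence along any path each of $p,q$ obeys a linear homogeneous ODE, and connectedness of $\mathcal{O}$ shows that each either vanishes identically or vanishes nowhere. The three cases in which at least one of them vanishes give $(a_3,a_4)=0$, $a_4=0$, or $a_3=0$, and are integrated directly from $r_{/i}=\frac12 r^2$, whose nonzero solutions are $r=-2(x_i+c)^{-1}$. On the open set where $p\neq0$, the equation $p_{/4}=\frac12 p^2$ gives $(1/p)_{/4}=-\frac12$, hence $p=-2(x_4-2h(x_3))^{-1}$; then $p_{/3}=\frac12 pq$ yields $q=4h'(x_4-2h)^{-1}$, and imposing the remaining equation $q_{/3}=\frac12 q^2$ forces $h''=0$. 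Thus $h(x_3)=\alpha x_3+\beta$ is affine, and substituting back recovers the asserted form of $(3)$ with $a_4=1$, $a_3=-2\alpha$, $a_0=-2\beta$, completing the cycle.
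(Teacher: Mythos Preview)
Your proof is correct and takes a genuinely different route from the paper. The paper never isolates the implication $(1)\Rightarrow(2)$; instead it proves $(1)\Rightarrow(3)$ directly by integrating $p^2=2p_{/4}$ to get $p=-2(\xi(x_3)+x_4)^{-1}$, then using $pq=p_{/3}+q_{/4}$ to solve for $q$, and finally feeding the result into $q^2=2q_{/3}$ to force $\xi$ affine---a longer computation with more bookkeeping (and an appeal to an open--closed ``analytic continuation'' argument to pass from a ball to all of $\mathcal{O}$). Your integrability trick, deriving $w_{/3}=\tfrac32 qw$, $w_{/4}=\tfrac32 pw$ and then extracting $w^2=0$ from the equality of mixed partials, is sharper and makes $(1)\Rightarrow(2)$ a two-line observation; with $(2)$ in hand your integration for $(2)\Rightarrow(3)$ is then considerably simpler than the paper's. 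Your path-ODE argument that each of $p,q$ vanishes identically or nowhere is also cleaner than the paper's extension step.

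One small technical point in your $(2)\Rightarrow(3)$: when you write $p=-2(x_4-2h(x_3))^{-1}$ from $(1/p)_{/4}=-\tfrac12$, you are implicitly assuming that the $x_4$-slices of the projected domain in $\mathbb{R}^2$ are connected, which need not hold for a general open connected $\mathcal{O}$. This is easy to repair with your own tools: in the case $p\neq0$ everywhere, compute $(q/p)_{/3}=(q/p)_{/4}=0$ from the relations in $(2)$, so $q=\lambda p$ for a constant $\lambda$; then $\nabla(1/p)=(-\tfrac12\lambda,-\tfrac12)$ is constant on the connected domain, giving $1/p=-\tfrac12(\lambda x_3+x_4)+c$ globally and hence the form in $(3)$ with $(a_0,a_3,a_4)=(-2c,\lambda,1)$. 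This sidesteps the slice issue entirely.
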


Jacobi--Ricci commuting and  curvature--Ricci commuting are equivalent concepts in the context of metrics given by Equation
(\ref{eqn-1.b}).

\begin{theorem}\label{thm-1.5}
Let $\mathcal{M}$ be given by Equation (\ref{eqn-1.b}). The following assertions are equivalent:
\begin{enumerate}
\item $\mathcal{M}$ is
 Jacobi--Ricci commuting.
\item $\mathcal{M}$ is  curvature--Ricci commuting.
\item $g_{34}=x_1p(x_3,x_4)+x_2q(x_3,x_4)+s(x_3,x_4)$ where
$p_{/3}=q_{/4}$.\end{enumerate}
\end{theorem}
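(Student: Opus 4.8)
The plan is to reduce everything to an explicit computation with the curvature tensor of the metric \eqref{eqn-1.b} and then to exploit the strong structural constraints coming from the parallel null plane. Write $f:=g_{34}$ and $W:=\operatorname{span}\{\partial_{x_1},\partial_{x_2}\}$. First I would record the nonzero Christoffel symbols, curvature components, and the Ricci operator; these are built entirely from the derivatives of $f$, and, as for any Walker metric, the totally isotropic plane $W$ is parallel. Two qualitative facts drive the argument: (i) the Ricci operator $\rho$ is two-step nilpotent with $\operatorname{image}(\rho)\subseteq W\subseteq\ker(\rho)$ (this is consistent with Theorem \ref{thm-1.3}, where Einstein forces $\rho=0$); and (ii) because $W$ is parallel, every $\mathcal{R}(x,y)$, and hence every $\mathcal{J}(x)$, maps $W$ into $W$.

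From (i) and (ii) both commutators simplify drastically. For $w\in W$ one has $\rho w=0$ and $\mathcal{R}(x,y)w\in W$, whence $\rho\,\mathcal{R}(x,y)w=0$; thus $[\mathcal{R}(x,y),\rho]$ and, identically, $[\mathcal{J}(x),\rho]$ vanish on $W$, and both commutators have image in $W$. Consequently each of conditions (1) and (2) is equivalent to a \emph{finite} system of equations, obtained by evaluating the relevant commutator on the residual directions $\partial_{x_3},\partial_{x_4}$ modulo $W$. I would carry out this evaluation, reading off from the curvature formulas the explicit entries of $\rho(\partial_{x_3}),\rho(\partial_{x_4})\in W$ and of $\mathcal{R}(\partial_{x_i},\partial_{x_j})\partial_{x_3}$, $\mathcal{R}(\partial_{x_i},\partial_{x_j})\partial_{x_4}$, together with those of the $\mathcal{J}(\partial_{x_i})$.

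With the reduced systems in hand I would show that each of (1) and (2) is equivalent to (3). The implication (3)$\Rightarrow$(1),(2) is a direct substitution: inserting $g_{34}=x_1p(x_3,x_4)+x_2q(x_3,x_4)+s(x_3,x_4)$ kills the second $(x_1,x_2)$-derivatives $f_{/11},f_{/12},f_{/22}$, which is exactly what renders the off-diagonal curvature and Ricci entries compatible, and the surviving first-order relation closes up precisely when $p_{/3}=q_{/4}$ (equivalently $f_{/13}=f_{/24}$). For the converse directions I expect the reduced commutator equations to force first $f_{/11}=f_{/12}=f_{/22}=0$, i.e.\ that $f$ is affine in $(x_1,x_2)$ with coefficients $p,q,s$ depending only on $(x_3,x_4)$, and then the single residual relation $p_{/3}=q_{/4}$.

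The main obstacle is the equivalence (1)$\Leftrightarrow$(2): showing that the a priori \emph{stronger} curvature--Ricci commuting condition imposes nothing beyond Jacobi--Ricci commuting. There is no formal implication here, since $\mathcal{J}(x)$ is not a linear combination of the operators $\mathcal{R}(y,z)$; indeed $[\mathcal{J}(x),\rho]y=\mathcal{R}(\rho y,x)x-\rho\,\mathcal{R}(y,x)x$, and even after using (2) to rewrite the second term as $\mathcal{R}(y,x)\rho x$ there is no a priori reason for the two terms to cancel. The content of the theorem is that, for the restricted class \eqref{eqn-1.b}, the two reduced systems collapse to the \emph{same} set of equations on $f$. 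I would therefore solve each reduced system explicitly and verify that the extra equations contributed by the full curvature tensor in (2) are already consequences of the smaller Jacobi system in (1). This final bookkeeping, rather than any single hard estimate, is where the real work lies.
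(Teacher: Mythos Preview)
Your overall architecture matches the paper's: reduce to the affine case via the commutator coefficients, verify $(3)\Rightarrow(1),(2)$ by direct substitution, and then extract $p_{/3}=q_{/4}$ from the remaining commutator equations. But there is a genuine gap in the last step, and it is exactly where you expect only ``bookkeeping''.

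Once $g_{34}=x_1p+x_2q+s$, the relevant commutator entries do \emph{not} read off $p_{/3}=q_{/4}$ directly. What one actually finds (and what the paper computes) is that every nonzero entry of $[\rho,\mathcal{J}_1]$ and $[\rho,\mathcal{R}_1]$ factors as a product of $(q_{/4}-p_{/3})$ with one of the three ``Einstein'' quantities $p^2-2p_{/4}$, $q^2-2q_{/3}$, $pq-p_{/3}-q_{/4}$. So vanishing of the commutator gives only a dichotomy: either $p_{/3}=q_{/4}$, or the three Osserman relations of Theorem~\ref{thm-1.3}(3c) hold. To close the argument you must show that this second branch forces $p_{/3}=q_{/4}$ anyway. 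That is the content of Lemma~\ref{lem-1.4}, a separate PDE statement (integrating $p^2=2p_{/4}$ and $q^2=2q_{/3}$ and tracking the cross relation) that the paper proves in an appendix and explicitly flags as ``crucial'' in the proof of Theorem~\ref{thm-1.5}. Your proposal neither anticipates this factorization nor supplies the lemma.

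A secondary point: you identify $(1)\Leftrightarrow(2)$ as the main obstacle and plan to compare the two reduced systems against each other. The paper never does this. It proves $(1)\Leftrightarrow(3)$ and $(2)\Leftrightarrow(3)$ independently, and the same hidden branch (hence the same appeal to Lemma~\ref{lem-1.4}) arises in each. The real difficulty is not that one system might be larger than the other, but that both systems are degenerate in the same way and require the auxiliary PDE lemma to finish.
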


Lemma \ref{lem-1.4} shows that the conditions of Theorem \ref{thm-1.3} (3c) are very rigid. On the
other hand, the condition of Theorem \ref{thm-1.5} (3) that $p_{/3}=q_{/4}$ is, of course, nothing
but the condition that $\omega:=p\,dx_4+q\,dx_3$ is a closed $1$-form and thus there are many
examples.

We now turn our attention to Tsankov theory.
A Riemannian or Lorentzian manifold is Jacobi-Jacobi commuting if and only if
it is of constant sectional curvature \cite{BGa05, BGb06}. This is not the case in the
higher signature context. Further observe that any Jacobi-Jacobi commuting metric given by
Equation (\ref{eqn-1.b}) is semi-symmetric since the Jacobi operators are two-step nilpotent
\cite{GaVa}. Also observe that curvature-Ricci and curvature-curvature commuting are equivalent
conditions for metrics (\ref{eqn-1.b}), which is not a general fact (see Theorem \ref{thm-1.10} and Remark \ref{aaa}).

\begin{theorem}\label{thm-1.6}
Let $\mathcal{M}$ be given by Equation (\ref{eqn-1.b}).
\begin{enumerate}
\item The following assertions are equivalent:
\begin{enumerate}
\item $\mathcal{M}$ is  curvature--curvature commuting.
\item
$g_{34}=x_1p(x_3,x_4)+x_2q(x_3,x_4)+s(x_3,x_4)$ where
$p_{/3}=q_{/4}$.
\end{enumerate}
\item  Let $\mathcal{P}:=\operatorname{Span}\{\partial_{x_1},\partial_{x_2}\}$.
 The following assertions are
equivalent:
\begin{enumerate}
\item $\mathcal{R}(x,y)z\in\mathcal{P}$ for all $x,y,z$ and
$\mathcal{R}(x,y)z=0$ if $x$, $y$, or $z$ is in $\mathcal{P}$.
\item $\mathcal{M}$ is  curvature--Jacobi commuting.
\item $\mathcal{M}$ is Jacobi--Jacobi commuting.
\item $\mathcal{J}(x)^2=0$ for all $x$.
\item $\rho=0$.
\item $g_{34}=x_1p(x_3,x_4)+x_2q(x_3,x_4)+s(x_3,x_4)$ where $p^2=2p_{/4}$, $q^2=2q_{/3}$, and
$pq=p_{/3}+q_{/4}$.
\end{enumerate}\end{enumerate}
\end{theorem}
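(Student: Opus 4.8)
Throughout the proof set $f:=g_{34}$ and write $f_{/i}$, $f_{/ij}$ as in the text. The plan is to compute the connection and curvature of (\ref{eqn-1.b}) once and then to read off each of the conditions (a)--(f) as a system of differential equations on $f$, matching these systems against Lemma \ref{lem-1.4} and Theorem \ref{thm-1.3}. First I would compute the Levi--Civita connection: the nonzero Christoffel symbols are built from the first derivatives $f_{/i}$, and one checks directly that $\mathcal{P}:=\operatorname{Span}\{\partial_{x_1},\partial_{x_2}\}$ is parallel, so that $\nabla_X Y\in\mathcal{P}$ whenever $Y\in\mathcal{P}$ and in particular every curvature operator preserves $\mathcal{P}$. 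A second differentiation gives the curvature. The decisive structural point is that in the frame $\{\partial_{x_1},\dots,\partial_{x_4}\}$ each operator $\mathcal{R}(\partial_{x_i},\partial_{x_j})$ is block upper triangular with respect to $\mathcal{P}\subset T\mathcal{M}$, and the induced action of $\mathcal{R}(\partial_{x_3},\partial_{x_4})$ on $T\mathcal{M}/\mathcal{P}$ is a $2\times2$ block $E$ whose entries are $-\frac12 f_{/13}+\frac14 f_{/1}f_{/2}$, $-\frac12 f_{/23}+\frac14 f_{/2}^2$, $\frac12 f_{/14}-\frac14 f_{/1}^2$ and $\frac12 f_{/24}-\frac14 f_{/1}f_{/2}$; the analogous quotient blocks of $\mathcal{R}(\partial_{x_1},\partial_{x_3})$, $\mathcal{R}(\partial_{x_2},\partial_{x_4})$, $\dots$ are governed by the pure second derivatives $f_{/11}$, $f_{/12}$, $f_{/22}$. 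The Jacobi and Ricci operators are then read off from these blocks, and $\rho$ is seen to be nilpotent with image in $\mathcal{P}$.

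For Assertion (1) I would expand the commutators $[\mathcal{R}(\partial_{x_i},\partial_{x_j}),\mathcal{R}(\partial_{x_k},\partial_{x_l})]$ on the frame. Since $\mathcal{R}(\partial_{x_1},\partial_{x_2})=0$ and all operators are block triangular, the commutators are again block triangular, and their vanishing splits into two stages: first, requiring the operators with an index in $\{1,2\}$ to commute with $\mathcal{R}(\partial_{x_3},\partial_{x_4})$ forces $f_{/11}=f_{/12}=f_{/22}=0$, i.e. $f=x_1p+x_2q+s$ with $p,q,s$ functions of $(x_3,x_4)$; once this affine form is imposed, every operator other than $\mathcal{R}(\partial_{x_3},\partial_{x_4})$ becomes strictly triangular (image in $\mathcal{P}$, kernel containing $\mathcal{P}$), and the only surviving commutator condition is $\operatorname{tr}E=0$, which is exactly $p_{/3}=q_{/4}$. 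This yields (1a)$\Leftrightarrow$(1b); alternatively, as Theorem \ref{thm-1.5} already identifies the curvature--Ricci commuting condition with (1b), it suffices to check that the two conditions reduce to the same block identities.

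For Assertion (2) the backbone is (a)$\Leftrightarrow$(f). Imposing $\mathcal{R}(x,y)z\in\mathcal{P}$ for all $x,y,z$ forces at once the vanishing of $f_{/11},f_{/12},f_{/22}$ (the affine form) and $E=0$; the four scalar equations $E=0$ read $p_{/3}=\frac12 pq$, $q_{/4}=\frac12 pq$, $p^2=2p_{/4}$, $q^2=2q_{/3}$, which are precisely condition (2) of Lemma \ref{lem-1.4} and hence, by that lemma, condition (f). The additional requirement in (a) that $\mathcal{R}(x,y)z$ vanish whenever an argument lies in $\mathcal{P}$ is then automatic. Theorem \ref{thm-1.3} gives (e)$\Leftrightarrow$(f). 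The implications (a)$\Rightarrow$(b),(c),(d) are formal: under (a) every $\mathcal{J}(x)$ and every $\mathcal{R}(y,z)$ has image in $\mathcal{P}$ and kernel containing $\mathcal{P}$, so any composite of two such operators vanishes; in particular all the commutativity identities hold and $\mathcal{J}(x)^2=0$. Finally (d)$\Rightarrow$(e) is immediate, since $\mathcal{J}(x)^2=0$ forces $\mathcal{J}(x)$ to be nilpotent, whence $g(\rho x,x)=\operatorname{Tr}\{\mathcal{J}(x)\}=0$ for every $x$ and therefore $\rho=0$ by self-adjointness and polarization.

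It remains to feed (b) and (c) back into this cycle. For (c) I would invoke the fact noted above that, under Jacobi--Jacobi commuting, the Jacobi operators of a metric (\ref{eqn-1.b}) are two-step nilpotent; this is exactly (c)$\Rightarrow$(d), and then (d)$\Rightarrow$(e)$\Leftrightarrow$(f)$\Leftrightarrow$(a) closes that branch. For (b) I would expand $\mathcal{J}(x)\mathcal{R}(y,z)=\mathcal{R}(y,z)\mathcal{J}(x)$ on the frame and show that the non-$\mathcal{P}$ parts of the two sides can agree only when $E=0$, giving (b)$\Rightarrow$(a). I expect the main obstacle to lie precisely in this passage from the quantified operator identities---in Assertion (1) and in the implication (b)$\Rightarrow$(a)---to a minimal finite system of equations on $f$: the operators involved are nilpotent rather than semisimple, so one cannot diagonalize simultaneously, and one must instead track the block-triangular entries by hand, choosing enough test arguments to discard the automatically symmetric terms and isolate the genuinely new relations. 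Once these systems are extracted, recognizing them through Lemma \ref{lem-1.4} and Theorem \ref{thm-1.3} is routine.
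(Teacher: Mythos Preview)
Your outline for Assertion~(1) is essentially the paper's argument, and your packaging is in one respect cleaner: once $g_{34}$ is affine, the off-diagonal blocks $N_{ij}$ of $\mathcal{R}(\partial_{x_i},\partial_{x_j})$ (with $i\in\{1,2\}$) are all antisymmetric $2\times2$ matrices, and since the block of $\mathcal{R}(\partial_{x_3},\partial_{x_4})$ on $\mathcal{P}$ equals $-E^T$, the commutator $[\mathcal{R}(\partial_{x_3},\partial_{x_4}),N]$ has off-diagonal block $-(E^T N+NE)$, which for antisymmetric $N$ is $(\operatorname{tr}E)\,N$. So your claim that the sole surviving relation is $\operatorname{tr}E=0$ is correct; the paper instead extracts two specific coefficients of the form $(\text{entry of }E)\cdot(q_{/4}-p_{/3})$ and argues by contradiction.

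For Assertion~(2) the paper takes a genuinely shorter route at the step you flag as the obstacle. Rather than computing (b)$\Rightarrow$(a) or relying on an external citation for (c)$\Rightarrow$(d), the paper gets both (b)$\Rightarrow$(d) and (c)$\Rightarrow$(d) by a one-line, model-independent trick using $\mathcal{J}(x)x=0$: from curvature--Jacobi commuting,
\[
0=\mathcal{R}(x,y)\mathcal{J}(x)x=\mathcal{J}(x)\mathcal{R}(x,y)x=-\mathcal{J}(x)^2y,
\]
and from Jacobi--Jacobi commuting one polarizes to $\mathcal{J}(x,y)$ and runs the same identity with $\mathcal{J}(x,y)x=-\tfrac12\mathcal{J}(x)y$. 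This bypasses all block bookkeeping and, crucially, bypasses the need to first establish the affine form of $g_{34}$ under hypothesis (b); note that Section~\ref{sect-3} does not cover the curvature--Jacobi or Jacobi--Jacobi cases, so your proposed direct computation (b)$\Rightarrow$(a) would have to redo that reduction from scratch before you could even speak of $E$. Your plan would go through, but the paper's algebraic shortcut is both shorter and more robust.
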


As it is a feature of our analysis that the warping function $g_{34}$ is
necessarily affine, it is worth putting such metrics in a geometrical context.
Let $\nabla$ be a torsion free connection on a smooth manifold $N$; the resulting
structure $\mathcal{A}:=(N,\nabla)$ is said to be an {\it affine manifold}. The associated
{\it Jacobi operator} and {\it Ricci tensor} are defined, respectively, by
$$\mathcal{J}_{\mathcal{A}}(x):y\rightarrow\mathcal{R_{\mathcal{A}}}(y,x)x\quad\text{and}
\quad
\rho_{\mathcal{A}}(x,y):=\operatorname{Tr}\{z\rightarrow\mathcal{R_{\mathcal{A}}}(z,x)y\}\,.$$
We say
$\mathcal{A}$ is {\it affine Osserman} if $\mathcal{J}_{\mathcal{A}}(x)$ is nilpotent for all
tangent vectors $x$, i.e. if $\operatorname{Spec}\{\mathcal{J}_{\mathcal{A}}(x)\}=\{0\}$ for all tangent vectors.
If the affine structure arises from a pseudo-Riemannian structure, i.e. if $\nabla$ is the Levi-Civita
connection of a pseudo-Riemannian metric, then $(N,\nabla)$ is affine Osserman implies $(N,g)$ is
Osserman; the converse implication is false in general as not every Osserman manifold is nilpotent
Osserman. If $x=(x_1,...,x_m)$ are local coordinates on $N$, the Christoffel symbols are given by
$\nabla_{\partial_{x_i}}\partial_{x_j}=\sum_k\Gamma_{ij}{}^k\partial_{x_k}$.

Let $\mathcal{A}:=(N,\nabla)$ be a $2$-dimensional affine manifold.
 Let
$(x_3,x_4)$ be local coordinates on $N$.  Let $\omega=x_1dx_3+x_2dx_4\in T^*N$; $(x_1,x_2)$ are
the dual fiber coordinates. Let $\xi=\xi_{ij}(x_3,x_4)\in C^\infty(S^2(T^*N))$ be an auxiliary
symmetric bilinear form. The \emph{deformed Riemannian extension} is the Walker metric on $T^*N$
defined by setting
\begin{equation}\label{eqn-1.c}
\begin{array}{l}
g(\partial_{x_1},\partial_{x_3})=g(\partial_{x_2},\partial_{x_4})=1,\\
g(\partial_{x_3},\partial_{x_3})=-2x_1\Gamma_{33}{}^3(x_3,x_4)
-2x_2\Gamma_{33}{}^4(x_3,x_4)+\xi_{33}(x_3,x_4),\\
g(\partial_{x_3},\partial_{x_4})=-2x_1\Gamma_{34}{}^3(x_3,x_4)
-2x_2\Gamma_{34}{}^4(x_3,x_4)+\xi_{34}(x_3,x_4),\\
g(\partial_{x_4},\partial_{x_4})=-2x_1\Gamma_{44}{}^3(x_3,x_4)
-2x_2\Gamma_{44}{}^4(x_3,x_4)+\xi_{44}(x_4,x_4)\,.
\end{array}\end{equation}
The crucial fact  \cite{YI} is that the resulting neutral signature pseudo-Riemannian
manifold $\mathcal{M}$ is independent of the particular coordinates $(x_3,x_4)$ which were chosen
and is determined by $(N,\nabla,\xi)$. Moreover, proceeding as in \cite{GVV99} one has that $(N,\nabla)$
is affine Osserman if and only if the deformed Riemannian extension is Osserman for any choice
of $\xi$.

Assuming that $g_{33}=g_{44}=0$ on $M$ is equivalent to assuming that
\begin{equation}\label{eqn-1.x3}
\Gamma_{33}{}^3=\Gamma_{44}{}^3=\Gamma_{33}{}^4=\Gamma_{44}{}^4=\xi_{33}=\xi_{44}=0
\end{equation} on
$N$, i.e. that there exist coordinates on
$N$ where the two families of coordinate lines on $N$ are parallel and $\xi$-null.

We use the correspondence between $\mathcal{A}$ and
$\mathcal{M}$ to express the conditions which appear in Theorems \ref{thm-1.3}, \ref{thm-1.5}, and
\ref{thm-1.6} in a natural and covariant setting.

\begin{theorem}\label{thm-1.8}
Let $\mathcal{A}$ be a 2-dimensional affine manifold satisfying Equation (\ref{eqn-1.x3}). Let $\mathcal{M}$ be the
deformed Riemannian extension defined by Equation (\ref{eqn-1.c}). Decompose
$\rho_{\mathcal{A}}={\rho_{\mathcal{A}}^s}+{\rho_{\mathcal{A}}^a}$ into the symmetric and the
anti-symmetric parts. Then
\begin{enumerate}
\item ${\rho_{\mathcal{A}}^a}=0$ $\Leftrightarrow$ $\mathcal{M}$ is  curvature--curvature commuting
$\Leftrightarrow$
$\mathcal{M}$ is  curvature--Ricci commuting $\Leftrightarrow$ $\mathcal{M}$ is  Jacobi--Ricci commuting.
\item ${\rho_{\mathcal{A}}^s}=0$ $\Leftrightarrow$ $\rho_{\mathcal{A}}=0$ $\Leftrightarrow$ $\mathcal{A}$
is affine Osserman
$\Leftrightarrow$ $\mathcal{M}$ is Osserman $\Leftrightarrow$ $\mathcal{M}$ is  curvature--Jacobi commuting
$\Leftrightarrow$
$\mathcal{M}$ is  Jacobi--Jacobi commuting.
\end{enumerate}
\end{theorem}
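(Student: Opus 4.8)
The plan is to reduce everything to the characterizations already obtained in Theorems \ref{thm-1.3}, \ref{thm-1.5}, and \ref{thm-1.6} by computing the affine Ricci tensor $\rho_{\mathcal{A}}$ explicitly. Comparing the deformed Riemannian extension (\ref{eqn-1.c}) under the hypothesis (\ref{eqn-1.x3}) with the normal form appearing in Theorems \ref{thm-1.5} and \ref{thm-1.6}, one reads off the warping data as $p=-2\Gamma_{34}{}^3$, $q=-2\Gamma_{34}{}^4$, and $s=\xi_{34}$, so that $g_{34}=x_1p+x_2q+s$. Under (\ref{eqn-1.x3}) the only surviving Christoffel symbols of $\mathcal{A}$ are $\Gamma_{34}{}^3=\Gamma_{43}{}^3$ and $\Gamma_{34}{}^4=\Gamma_{43}{}^4$, so the affine curvature $\mathcal{R}_{\mathcal{A}}$ and hence $\rho_{\mathcal{A}}$ can be computed directly from the definitions in the Introduction.

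First I would compute the four components of $\rho_{\mathcal{A}}$. A short calculation (with the sign and trace conventions fixed in Section \ref{sect-1}) gives
\[
\rho_{\mathcal{A}}(\partial_{x_3},\partial_{x_3})=\tfrac12 q_{/3}-\tfrac14 q^2,\qquad
\rho_{\mathcal{A}}(\partial_{x_4},\partial_{x_4})=\tfrac12 p_{/4}-\tfrac14 p^2,\qquad
\rho_{\mathcal{A}}(\partial_{x_3},\partial_{x_4})=-\tfrac12 q_{/4}+\tfrac14 pq,\qquad
\rho_{\mathcal{A}}(\partial_{x_4},\partial_{x_3})=-\tfrac12 p_{/3}+\tfrac14 pq.
\]
Splitting into symmetric and anti-symmetric parts yields $\rho_{\mathcal{A}}^a(\partial_{x_3},\partial_{x_4})=\tfrac14(p_{/3}-q_{/4})$, together with $\rho_{\mathcal{A}}^s(\partial_{x_3},\partial_{x_3})=\tfrac12 q_{/3}-\tfrac14 q^2$, $\rho_{\mathcal{A}}^s(\partial_{x_4},\partial_{x_4})=\tfrac12 p_{/4}-\tfrac14 p^2$, and $\rho_{\mathcal{A}}^s(\partial_{x_3},\partial_{x_4})=\tfrac14(pq-p_{/3}-q_{/4})$. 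Thus $\rho_{\mathcal{A}}^a=0$ is exactly the condition $p_{/3}=q_{/4}$, while $\rho_{\mathcal{A}}^s=0$ is exactly the system $p^2=2p_{/4}$, $q^2=2q_{/3}$, $pq=p_{/3}+q_{/4}$.

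Assertion (1) is then immediate: by Theorem \ref{thm-1.5} the conditions $p_{/3}=q_{/4}$, curvature--Ricci commuting, and Jacobi--Ricci commuting coincide, and by Theorem \ref{thm-1.6}(1) this is also curvature--curvature commuting, so all four are equivalent to $\rho_{\mathcal{A}}^a=0$. For Assertion (2), the system $\rho_{\mathcal{A}}^s=0$ is precisely the Osserman system of Theorem \ref{thm-1.3}(3c), hence equivalent to $\mathcal{M}$ Osserman, and by Theorem \ref{thm-1.6}(2) also to curvature--Jacobi commuting, Jacobi--Jacobi commuting, and $\rho=0$. Moreover Lemma \ref{lem-1.4} shows this system forces $p_{/3}=q_{/4}=\tfrac12 pq$, so $\rho_{\mathcal{A}}^a=0$ as well; combined with $\rho_{\mathcal{A}}^s=0$ this gives $\rho_{\mathcal{A}}=0$, and the converse is trivial.

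It remains to connect $\rho_{\mathcal{A}}^s=0$ with the affine Osserman condition. The clean structural point is that in dimension two the affine Jacobi operator has rank at most one: writing $x=\alpha\partial_{x_3}+\beta\partial_{x_4}$, one has $\mathcal{J}_{\mathcal{A}}(x)\partial_{x_3}=\beta\,w$ and $\mathcal{J}_{\mathcal{A}}(x)\partial_{x_4}=-\alpha\,w$, where $w:=\mathcal{R}_{\mathcal{A}}(\partial_{x_3},\partial_{x_4})x$, so both columns are proportional to $w$ and $\det\mathcal{J}_{\mathcal{A}}(x)\equiv 0$. Consequently $\mathcal{J}_{\mathcal{A}}(x)$ is nilpotent if and only if $\operatorname{Tr}\mathcal{J}_{\mathcal{A}}(x)=0$, and demanding this for all $x$ is, after collecting the coefficients of $\alpha^2$, $\beta^2$, and $\alpha\beta$, exactly $\rho_{\mathcal{A}}(\partial_{x_3},\partial_{x_3})=\rho_{\mathcal{A}}(\partial_{x_4},\partial_{x_4})=\rho_{\mathcal{A}}(\partial_{x_3},\partial_{x_4})+\rho_{\mathcal{A}}(\partial_{x_4},\partial_{x_3})=0$, i.e.\ $\rho_{\mathcal{A}}^s=0$. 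This closes the chain in Assertion (2). I expect the main obstacle to be purely one of bookkeeping: keeping the sign and factor conventions of the three curvature operators --- $\mathcal{R}_{\mathcal{A}}$, the Levi--Civita curvature $\mathcal{R}$ of $\mathcal{M}$, and the normalizations of Theorems \ref{thm-1.3}, \ref{thm-1.5}, \ref{thm-1.6} --- mutually consistent, since once the four Ricci components are pinned down every equivalence is read off mechanically.
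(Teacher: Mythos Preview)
Your proposal is correct and follows essentially the same route as the paper: identify $p=-2\Gamma_{34}{}^3$, $q=-2\Gamma_{34}{}^4$, compute the four components of $\rho_{\mathcal{A}}$ (your values agree with the paper's Lemma~\ref{lem-7.1}), read off that $\rho_{\mathcal{A}}^a=0\Leftrightarrow p_{/3}=q_{/4}$ and $\rho_{\mathcal{A}}^s=0\Leftrightarrow\{p^2=2p_{/4},\ q^2=2q_{/3},\ pq=p_{/3}+q_{/4}\}$, and then invoke Theorems~\ref{thm-1.3}, \ref{thm-1.5}, \ref{thm-1.6} together with Lemma~\ref{lem-1.4}. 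The one place you differ is the affine Osserman link: the paper outsources ``$\mathcal{A}$ affine Osserman $\Leftrightarrow$ $\mathcal{M}$ Osserman'' to \cite{GVV99}, whereas you give a direct and clean self-contained argument (rank-one Jacobi operator in dimension two, so nilpotent $\Leftrightarrow$ trace-free $\Leftrightarrow$ $\rho_{\mathcal{A}}^s=0$), which is a genuine improvement in transparency.
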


If $g_{33}=g_{44}=0$, then ${\rho_{\mathcal{A}}^s}=0\Leftrightarrow\rho_{\mathcal{A}}=0$. This is, of
course, a reflection of the equivalence of conditions (1) and (2) in Lemma \ref{lem-1.4}; so far we have
only been considering two different conditions on $g_{34}$. However, this is not
the case for a more general affine extension. The following is the analogue of Theorem \ref{thm-1.8}
in the more general context; in contrast to the situation with Theorem \ref{thm-1.8}, there are 4 cases of
interest and not just 2. The following result extends Theorem \ref{thm-1.8} to the general covariant setting of
the cotangent bundle of a $2$-dimensional manifold:

\begin{theorem}\label{thm-1.10}
Let $\mathcal{A}$ be a 2-dimensional affine manifold and let $\mathcal{M}$ be the deformed
Riemannian extension defined by Equation (\ref{eqn-1.c}); we impose no additional restrictions on $\nabla$.
Then:\begin{enumerate}
\item ${\rho_{\mathcal{A}}^a}=0$ $\Leftrightarrow$ $\mathcal{M}$ is  curvature--curvature commuting.
\item ${\rho_{\mathcal{A}}^s}=0$ $\Leftrightarrow$ $\mathcal{A}$ is affine Osserman $\Leftrightarrow$
$\mathcal{M}$ is Osserman.
\item ${\rho_{\mathcal{A}}^a}=0$ or ${\rho_{\mathcal{A}}^s}=0$ $\Leftrightarrow$ $\mathcal{M}$ is
curvature--Ricci commuting $\Leftrightarrow$ $\mathcal{M}$ is  Jacobi--Ricci commuting.
\item $\rho_{\mathcal{A}}=0$ $\Leftrightarrow$ $\mathcal{M}$ is  curvature--Jacobi commuting
$\Leftrightarrow$
$\mathcal{M}$ is  Jacobi--Jacobi commuting.
\end{enumerate}
\end{theorem}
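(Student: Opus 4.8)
The plan is to compute the curvature of the deformed Riemannian extension explicitly and then read off each commutativity property as an algebraic condition on $\rho_{\mathcal{A}}$. First I would compute the Christoffel symbols $\tilde\Gamma_{ij}{}^k$ of $\mathcal{M}$ from Equation (\ref{eqn-1.c}). Because $\mathcal{P}=\operatorname{Span}\{\partial_{x_1},\partial_{x_2}\}$ is null, parallel, and maximal isotropic (so $\mathcal{P}^\perp=\mathcal{P}$), the inverse metric satisfies $g^{33}=g^{34}=g^{44}=0$ and every curvature endomorphism $\mathcal{R}_{\mathcal{M}}(x,y)$ preserves $\mathcal{P}$. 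Relative to the filtration $\mathcal{P}\subset TM\to TM/\mathcal{P}\cong \pi^*TN$, each $\mathcal{R}_{\mathcal{M}}(x,y)$ is block upper-triangular: its action on $TM/\mathcal{P}$ is a scalar multiple of the single base operator $M:=\mathcal{R}_{\mathcal{A}}(\partial_{x_3},\partial_{x_4})$, while skew-symmetry with respect to $g$ forces its action on $\mathcal{P}$ to be $-M^{\mathsf T}$. The structural payoff is that $\rho_{\mathcal{M}}$ is two-step nilpotent with image in $\mathcal{P}$ and nontrivial block recording $\rho_{\mathcal{A}}^s$, whereas the trace data of $M$ records $\rho_{\mathcal{A}}^a$; indeed, for a torsion-free connection the first Bianchi identity gives $\rho_{\mathcal{A}}(x,y)-\rho_{\mathcal{A}}(y,x)=-\operatorname{Tr}\mathcal{R}_{\mathcal{A}}(x,y)$, so up to a nonzero constant $\operatorname{Tr}M$ is the single component of $\rho_{\mathcal{A}}^a$. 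I would also verify that the auxiliary form $\xi$ feeds only into the $\operatorname{Hom}(TM/\mathcal{P},\mathcal{P})$ block and drops out of every commutator below, consistent with the conditions being phrased purely through $\rho_{\mathcal{A}}$.

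I would dispatch Assertions (2) and (4) first. For (2), since $\mathcal{J}_{\mathcal{A}}(x)x=\mathcal{R}_{\mathcal{A}}(x,x)x=0$, the operator $\mathcal{J}_{\mathcal{A}}(x)$ always has $x$ in its kernel, so on the $2$-dimensional base it is nilpotent if and only if it is trace free; as $\operatorname{Tr}\mathcal{J}_{\mathcal{A}}(x)=\rho_{\mathcal{A}}(x,x)=\rho_{\mathcal{A}}^s(x,x)$, polarization gives affine Osserman $\Leftrightarrow\rho_{\mathcal{A}}^s=0$, and the equivalence with $\mathcal{M}$ being Osserman is the affine-Osserman criterion quoted from \cite{GVV99,YI}. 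For (4), the curvature--Jacobi and Jacobi--Jacobi conditions force $\mathcal{J}_{\mathcal{M}}(x)^2=0$ for all $x$, exactly as in Theorem \ref{thm-1.6}(2); from the block description this kills both the quotient action $M$ and the action $-M^{\mathsf T}$ on $\mathcal{P}$, which is equivalent to $M=0$, i.e. to $\rho_{\mathcal{A}}=0$.

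The genuinely new content is Assertions (1) and (3). In the block picture the quotient-blocks of any two curvature endomorphisms are scalar multiples of the same $M$, hence commute automatically, so $[\mathcal{R}_{\mathcal{M}}(w,x),\mathcal{R}_{\mathcal{M}}(y,z)]$ lands in $\operatorname{Hom}(TM/\mathcal{P},\mathcal{P})$. I expect this residual block, after applying Cayley--Hamilton to the $2\times 2$ matrix $M$, to collapse to a multiple of $\operatorname{Tr}M$, giving (1). For (3), the commutator $[\mathcal{R}_{\mathcal{M}}(x,y),\rho_{\mathcal{M}}]$ pairs the trace part of $\mathcal{R}_{\mathcal{M}}$ (governed by $\rho_{\mathcal{A}}^a$) against the nilpotent block of $\rho_{\mathcal{M}}$ (governed by $\rho_{\mathcal{A}}^s$); I expect it to factor as a product proportional to $\rho_{\mathcal{A}}^a\cdot\rho_{\mathcal{A}}^s$, whose vanishing yields the disjunction ``$\rho_{\mathcal{A}}^a=0$ or $\rho_{\mathcal{A}}^s=0$'' rather than a single linear condition. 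The equivalence of the curvature--Ricci and Jacobi--Ricci formulations follows, as in the restricted setting, by polarizing $\mathcal{J}_{\mathcal{M}}(x)$ in $x$, which in this Walker geometry generates the same constraints as the full operator $\mathcal{R}_{\mathcal{M}}$.

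The main obstacle I anticipate is organizational rather than conceptual: carrying out the curvature computation for the \emph{unrestricted} extension, where $\Gamma_{33}{}^3$, $\Gamma_{44}{}^4$, $\xi_{33}$, $\xi_{44}$ now contribute and the clean vanishing behind Theorem \ref{thm-1.8} no longer holds. The most delicate step is (3): verifying that the obstruction genuinely factors as $\rho_{\mathcal{A}}^a\cdot\rho_{\mathcal{A}}^s$, since it is precisely this factorization---and not a single linear condition---that produces four inequivalent cases here in place of the two cases of Theorem \ref{thm-1.8}, and one must confirm that no cross terms survive to couple the two factors.
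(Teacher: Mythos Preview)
The paper does not actually prove Theorem \ref{thm-1.10}; it states only that ``the proof is similar to the proof we shall give to establish Theorem \ref{thm-1.8}'', and that proof proceeds by direct Mathematica-assisted computation of curvature components, extracting specific monomial coefficients and matching them against the entries of $\rho_{\mathcal{A}}$. Your structural approach via the block decomposition relative to the parallel null plane $\mathcal{P}$ is therefore a genuinely different route. It is more conceptual and, if it goes through, explains \emph{why} the four conditions separate as they do rather than merely verifying it symbolically; the paper's method, by contrast, is mechanical but leaves no doubt about the result.

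There is, however, a real gap in your treatment of Assertion (4). You correctly observe that the quotient block of $\mathcal{R}_{\mathcal{M}}(x,y)$ is a scalar multiple of $M=\mathcal{R}_{\mathcal{A}}(\partial_{x_3},\partial_{x_4})$, but you then apply this description to the Jacobi operator. The quotient block of $\mathcal{J}_{\mathcal{M}}(x)$ is $\mathcal{J}_{\mathcal{A}}(\bar x)$, which is \emph{not} a scalar multiple of $M$; it is the rank-one map $y\mapsto (\bar x\wedge y)\,M\bar x$. Consequently, $\mathcal{J}_{\mathcal{M}}(x)^2=0$ gives only $\mathcal{J}_{\mathcal{A}}(\bar x)^2=0$, i.e.\ $\operatorname{Tr}\mathcal{J}_{\mathcal{A}}(\bar x)=\rho_{\mathcal{A}}^s(\bar x,\bar x)=0$, which is exactly the affine Osserman condition $\rho_{\mathcal{A}}^s=0$ of Assertion (2), not $M=0$. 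Your own earlier remark that $\rho_{\mathcal{M}}$ records $\rho_{\mathcal{A}}^s$ confirms this: the trace argument cannot see $\rho_{\mathcal{A}}^a$. The Remark following Theorem \ref{thm-1.10} exhibits a connection with $\rho_{\mathcal{A}}^s=0$ but $\rho_{\mathcal{A}}^a\ne0$, and for that example $\mathcal{M}$ is Osserman yet not Jacobi--Jacobi commuting; so the extra constraint $\rho_{\mathcal{A}}^a=0$ must come from the off-diagonal block equation $B^{\mathsf T}C+CB=0$ (with $B=\mathcal{J}_{\mathcal{A}}(\bar x)$), where $C$ now depends on the full Christoffel data and on $\xi$. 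You have not indicated how to extract $\rho_{\mathcal{A}}^a=0$ from that relation, and this is precisely the place where the unrestricted case diverges from Theorem \ref{thm-1.8}. This is the step that needs work; parts (1)--(3) of your plan look sound in outline, though the factorization you anticipate in (3) still has to be checked.
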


\begin{remark}\rm
If $\nabla$ is the torsion free connection on $\mathbb{R}^2$ with non-zero Christoffel symbols
$
    \nabla_{\partial_{x_3}}\partial_{x_4}=\nabla_{\partial_{x_4}}\partial_{x_3} = f(x_3)\partial_{x_3}$, 
    $\nabla_{\partial_{x_4}}\partial_{x_4}= f(x_3) \partial_{x_4}$,
for $f=f(x_3)$ with $\dot f(x_3)\neq 0$, we have ${\rho_{\mathcal{A}}^s}=0$ while
${\rho_{\mathcal{A}}^a}\neq 0$. Moreover, for the choice
$
    \nabla_{\partial_{x_3}}\partial_{x_3}=f(x_3,x_4) \partial_{x_4}$,
for $f=f(x_3,x_4)$ with $f_{/4}\neq 0$, it follows that ${\rho_{\mathcal{A}}^s}\neq 0$
while ${\rho_{\mathcal{A}}^a}=0$. Thus, in contrast to the case studied in Theorem \ref{thm-1.8},
these conditions are distinct. Combined with our previous results, this shows the four possibilities
in Theorem \ref{thm-1.10} are distinct.
\end{remark}

\begin{remark} \label{aaa}\rm
Adopt the notation of Equation (\ref{eqn-1.b}); one now has that any of the conditions Osserman,
Einstein,  curvature--curvature commuting, curvature--Jacobi commuting,  Jacobi--Jacobi commuting,
curvature--Ricci commuting, or  Jacobi--Ricci commuting implies
that $g_{34}$ is affine in the variables $\{x_1,x_2\}$ and hence $\mathcal{M}$ is a Riemannian
extension. This is not the case, however, in the more general context of Equation (\ref{eqn-1.a}).
Indeed, let $\mathcal{M}$ have the form given in Equation (\ref{eqn-1.a}) where
 \begin{enumerate}
\item $g_{33}=4kx_1^2-\textstyle\frac1{4k}f(x_4)^2$,
$g_{44}=4kx_2^2$, and $g_{34}=4kx_1x_2+x_2f(x_4)-\frac1{4k}\dot
f(x_4)$ for $f=f(x_4)$ non-constant and for $k\neq 0$. Then
$\mathcal{M}$ is Osserman with eigenvalues $\{0,4k,k,k\}$ and $\rho\ne0$. The
Jacobi operators are diagonalizable at $P$ $\Leftrightarrow$
$24kf(x_4)\dot f(x_4)x_2-12k\ddot f(x_4)x_1
+3f(x_4)\ddot f(x_4)+4\dot f(x_4)^2=0$, $\mathcal{M}$ is  Jacobi--Ricci commuting and  curvature--Ricci commuting,
$\mathcal{M}$ is neither  Jacobi--Jacobi commuting nor curvature--Jacobi commuting nor  curvature--curvature commuting.
\item $g_{33}=x_1x_2$, $g_{44}=-x_1x_2$, and $g_{34}=(x_2^2-x_1^2)/2$. Then $\mathcal{M}$ is
 curvature--curvature commuting,  curvature--Ricci commuting,  Jacobi--Ricci commuting, and $\rho^2=-\operatorname{id}$. However
$\mathcal{M}$ is not Einstein nor  curvature--Jacobi commuting nor  Jacobi--Jacobi commuting.
\end{enumerate}
(We refer to \cite{DGV06} for the proof of Assertion (1) and to \cite{GN06} for the proof of
Assertion (2)).
\end{remark}

\smallskip

Here is a brief outline to the paper.  In Section \ref{sect-3}, we reduce the proof of Theorems
\ref{thm-1.3}, \ref{thm-1.5}, and \ref{thm-1.6} to the case where $g_{34}$ is affine in
$\{x_1,x_2\}$. In Section \ref{sect-4}, we study the Osserman condition to establish Theorem
\ref{thm-1.3}.  We then turn to the study of commutativity conditions. In Section \ref{sect-5}, we establish Theorem
\ref{thm-1.5} and in Section
\ref{sect-6}, we verify Theorem \ref{thm-1.6}. Section
\ref{sect-7} deals with affine extensions and the proof of Theorem \ref{thm-1.8}. Finally, in Appendix
\ref{sect-2}, we prove the technical result stated in  Lemma \ref{lem-1.4}. We shall omit the proof of Theorem \ref{thm-1.10}
as the proof is similar to the proof we shall give to establish Theorem \ref{thm-1.8}; details are
available from the authors upon request.

\section{Reduction to an affine warping function}\label{sect-3}
Let $\mathcal{M}$ be given by Equation (\ref{eqn-1.b}). One of the crucial features we shall exploit is that
$\rho$, $\mathcal{J}(x)$, and $\mathcal{R}(x)$ are polynomial in the jets of $g_{34}$. One has by \cite{BGVa05}
that the non-zero components of the curvature tensor are, after adjusting for a difference in the sign
convention used therein, given by:
$$
\begin{array}{ll}
R_{1334}=-\frac14(g_{34/1}g_{34/2}-2g_{34/13}),&
R_{1314}=\frac12g_{34/11},\\
R_{1434}=-\frac14(-g_{34/1}^2+2g_{34/14}),&
R_{1324}=\frac12g_{34/12},\vphantom{\vrule height 11pt}\\
R_{2334}=-\frac14(g_{34/2}^2-2g_{34/23}),&
R_{1423}=\frac12g_{34/12},\vphantom{\vrule height 11pt}\\
R_{2434}=-\frac14(-g_{34/1}g_{34/2}+2g_{34/24}),&
R_{2324}=\frac12g_{34/22},\vphantom{\vrule height 11pt}\\
R_{3434}=-\frac12(-g_{34}g_{34/1}g_{34/2}+2g_{34/34})\,.\vphantom{\vrule height 11pt}
\end{array}$$
One can now use the metric to raise indices and compute $\mathcal{J}$, $\mathcal{R}$, and $\rho$.

If $P$ is a polynomial and
if $U$ is a monomial expression, we let $c(P,U)$ be the coefficient of $U$ in $P$. Let
\begin{equation}\label{eqn-3.a}
\begin{array}{ll}
\mathcal{R}_1:=\mathcal{R}(\textstyle\sum_ia_i\partial_{x_i},\textstyle\sum_jb_j\partial_{x_j}),&
\mathcal{R}_2:=\mathcal{R}(\textstyle\sum_ic_i\partial_{x_i},\textstyle\sum_jd_j\partial_{x_j}),\\\
\mathcal{J}_1:=\mathcal{J}(\textstyle\sum_iv_i\partial_{x_i}),&
  \mathcal{J}_2:=\mathcal{J}(\textstyle\sum_iw_i\partial_{x_i})\,.\vphantom{\vrule height 11pt}
\end{array}\end{equation}
We used Mathematica to assist us in the following computations. One has
\begin{eqnarray*}
&&\rho_{21}=\textstyle\frac12g_{34/11},\qquad
  \rho_{12}=\textstyle\frac12g_{34/22},\\
&&c(\{\rho\mathcal{R}_1-\mathcal{R}_1\rho\}_{21},a_4b_1)=-\textstyle\frac14g_{34/11}^2,\\
&&c(\{\rho\mathcal{R}_1-\mathcal{R}_1\rho\}_{12},a_3b_2)=-\textstyle\frac14g_{34/22}^2,\\
&&c(\{\rho\mathcal{J}_1-\mathcal{J}_1\rho\}_{21},v_1v_4)=\textstyle\frac14g_{34/11}^2,\\
&&c(\{\rho\mathcal{J}_1-\mathcal{J}_1\rho\}_{12},v_2v_3)=\textstyle\frac14g_{34/22}^2,\\
&&c(\{\mathcal{R}_1\mathcal{R}_2-\mathcal{R}_2\mathcal{R}_1\}_{21},a_4b_1c_3d_1)=-\textstyle\frac14g_{34/11}^2,\\
&&c(\{\mathcal{R}_1\mathcal{R}_2-\mathcal{R}_2\mathcal{R}_1\}_{12},a_4b_2c_3d_2)=\textstyle\frac14g_{34/22}^2\,.
\end{eqnarray*}
Consequently, if $\mathcal{M}$ is Einstein or  curvature--Ricci commuting or  Jacobi--Ricci commuting  or  curvature--curvature commuting, we
have $g_{34/11}=g_{34/22}=0$ so
$$g_{34}=x_1p(x_3,x_4)+x_2q(x_3,x_4)+x_1x_2r(x_3,x_4)+s(x_3,x_4)\,.$$
We then compute:
\begin{eqnarray*}
&&c(\rho_{13},x_1^2)=-\textstyle\frac12r(x_3,x_4)^2,\\
&&c(\{\rho\mathcal{R}_1-\mathcal{R}_1\rho\}_{13},a_4b_3x_1^3x_2)=-\textstyle\frac14r(x_3,x_4)^4,\\
&&c(\{\rho\mathcal{J}_1-\mathcal{J}_1\rho\}_{13},v_3v_4x_1^3x_2)=\textstyle\frac12r(x_3,x_4)^4,\\
&&c(\{\mathcal{R}_1\mathcal{R}_2-\mathcal{R}_2\mathcal{R}_1\}_{13},a_1b_4c_4d_3x_1^2x_2^2)=
\textstyle-\frac14r(x_3,x_4)^4\,.
\end{eqnarray*}
Consequently, if $\mathcal{M}$ is Einstein or  curvature--Ricci commuting or Jacobi--Ricci commuting  or  curvature--curvature commuting, we
have $g_{34/12}=0$ so $g_{34}$ is affine in $\{x_1,x_2\}$ and has the form:
\begin{equation}\label{eqn-3.b}
g_{34}=x_1p(x_3,x_4)+x_2q(x_3,x_4)+s(x_3,x_4)\,.
\end{equation}

\section{The Osserman condition.\\The proof of Theorem \ref{thm-1.3}}\label{sect-4}
Assertions (1) and (2) of Theorem \ref{thm-1.3} follow from work of \cite{BGVa05}. It is
immediate that (3a) implies (3b). Suppose (3b) holds so
$\mathcal{M}$ is Einstein and $g_{34}$ has the form of Equation (\ref{eqn-3.b}). One computes that
$$\rho=\left(\begin{array}{rrrr}
0&0&-\textstyle\frac12q^2+q_{/3}&\textstyle\frac12(pq-q_{/4}-p_{/3})\\
0&0&\textstyle\frac12(pq-q_{/4}-p_{/3})&-\textstyle\frac12p^2+p_{/4}\\
0&0&0&0\\0&0&0&0\end{array}\right)\,.$$ The equivalence of Assertions (3b), (3c), and (3d) in
Theorem \ref{thm-1.3} now follows. Suppose any of these holds. By Assertion (1), $\mathcal{M}$ is
conformally Osserman. Since $\rho=0$, $\mathcal{M}$ is Osserman.
\hfill\qedbox

\section{The proof of Theorem \ref{thm-1.5}}\label{sect-5}

A direct computation shows that if $g_{34}$ has the form given in (3) of Theorem~\ref{thm-1.5},
then $\mathcal{M}$ is both  curvature--Ricci commuting and  Jacobi--Ricci commuting. Furthermore, $\mathcal{R}\rho$ and
$\mathcal{J}\rho$ are generically non-zero. We adopt the notation of Equation (\ref{eqn-3.a}) and
(\ref{eqn-3.b}).
\begin{eqnarray*}
&&c(\{\rho\mathcal{J}_1-\mathcal{J}_1\rho\}_{14},v_4^2)=
  \textstyle\frac18(p^2-2p_{/4})(q_{/4}-p_{/3}),\\
&&c(\{\rho\mathcal{J}_1-\mathcal{J}_1\rho\}_{14},v_3^2)=\textstyle\frac18(q^2-2q_{/3})(q_{/4}-p_{/3}),\\
&&c(\{\rho\mathcal{J}_1-\mathcal{J}_1\rho\}_{14},v_3v_4)=
  -\textstyle\frac14(pq-q_{/4}-p_{/3})(q_{/4}-p_{/3}),\\
&&c(\{\rho\mathcal{R}_1-\mathcal{R}_1\rho\}_{13},a_3b_4)=
   -\textstyle\frac14(q^2-2q_{/3})(q_{/4}-p_{/3}),\\
&&c(\{\rho\mathcal{R}_1-\mathcal{R}_1\rho\}_{14},a_3b_4)=
   \textstyle\frac14(pq-q_{/4}-p_{/3})(q_{/4}-p_{/3}),\\
&&c(\{\rho\mathcal{R}_1-\mathcal{R}_1\rho\}_{24},a_3b_4)=
   -\textstyle\frac14(p^2-2p_{/4})(q_{/4}-p_{/3})\,.
\end{eqnarray*}
Suppose either that $\mathcal{M}$ is  Jacobi--Ricci commuting or that $\mathcal{M}$ is  curvature--Ricci commuting. We assume
that $q_{/4}\ne p_{/3}$ and argue for a contradiction. The relations given above then show
$p^2=2p_{/4}$, $q^2=2q_{/3}$, and $pq=q_{/4}+p_{/3}$. We now use Lemma \ref{lem-1.4} in a crucial fashion to
see that this implies
$q_{/4}=p_{/3}=\frac12pq$ which is contrary to our assumption. \hfill\qedbox

\section{The proof of Theorem \ref{thm-1.6}}\label{sect-6}
We begin by studying  curvature--curvature commuting
manifolds. A direct computation shows that if $g_{34}$ has the form given in Theorem \ref{thm-1.6}
(1b) then  $\mathcal{M}$ is  curvature--curvature commuting.
Suppose conversely that $\mathcal{M}$ is curvature--curvature commuting. Again, we adopt the
notation of Equation (\ref{eqn-3.a}) and (\ref{eqn-3.b}). We compute:
\begin{eqnarray*}
&&c(\{\mathcal{R}_1\mathcal{R}_2-\mathcal{R}_2\mathcal{R}_1\}_{14},a_3b_1c_4d_3)=
\phantom{-}\textstyle\frac18(pq-2p_{/3})(q_{/4}-p_{/3}),\\
&&c(\{\mathcal{R}_1\mathcal{R}_2-\mathcal{R}_2\mathcal{R}_1\}_{23},a_4b_2c_3d_4)=
-\textstyle\frac18(pq-2q_{/4})(q_{/4}-p_{/3})\,.
\end{eqnarray*}
This implies $p_{/3}=q_{/4}$.

Next, we study  curvature--Jacobi commuting and  Jacobi--Jacobi commuting manifolds. We polarize $\mathcal{J}$ to define
$\mathcal{J}(x,y)z:=\textstyle\frac12\{\mathcal{R}(z,x)y+\mathcal{R}(z,y)x\}$. If Assertion (2a)
holds in Theorem \ref{thm-1.6}, then
$\mathcal{J}\mathcal{R}=\mathcal{R}\mathcal{J}=\mathcal{J}^2=\mathcal{R}^2=0$ and $\mathcal{M}$ is
curvature--Jacobi commuting and  Jacobi--Jacobi commuting (see   \cite{BGb06}). Suppose that $\mathcal{M}$ is
curvature--Jacobi commuting. Then
$$0=\mathcal{R}(x,y)\mathcal{J}(x)x=\mathcal{J}(x)\mathcal{R}(x,y)x=-\mathcal{J}(x)^2y$$
and $\mathcal{J}(x)^2=0$ for all $x$. If $\mathcal{M}$ is  Jacobi--Jacobi commuting, then
$$0=\mathcal{J}(x,y)\mathcal{J}(x)x=\mathcal{J}(x)\mathcal{J}(x,y)x=-\textstyle\frac12\mathcal{J}(x)^2y$$
and again $\mathcal{J}(x)^2=0$ for all $x$. Thus either Assertion (2b) or Assertion (2c) of
Theorem \ref{thm-1.6} implies Assertion (2d) of Theorem \ref{thm-1.6} holds. If $\mathcal{J}(x)^2=0$,
then $\rho=0$. Finally, if $\rho=0$, we can use Theorem \ref{thm-1.3} (3) and once again Lemma \ref{lem-1.4} is
used to see that
$g_{34}=x_1p(x_3,x_4)+x_2q(x_3,x_4)+s(x_3,x_4)$ where $p^2=2p_{/4}$, $q^2=2q_{/3}$, and
$p_{/3}=q_{/4}=\frac12pq$. We compute that:
$$
\mathcal{R}_1=\left(\begin{array}{llll}
0&0&0&*\\0&0&*&0\\
0&0&0&0\\
0&0&0&0\end{array}\right)\quad\text{and}\quad
\mathcal{J}_1=\left(\begin{array}{llll}
0&0&*&*\\
0&0&*&*\\
0&0&0&0\\
0&0&0&0\end{array}\right)\,.
$$
This verifies that Theorem \ref{thm-1.6} (2a)   holds; the equivalence of (2e) and (2f) is
provided by Theorem \ref{thm-1.3}.\hfill\qedbox

\section{Affine extensions -- the proof of Theorem \ref{thm-1.8}}\label{sect-7}
Theorem \ref{thm-1.8} will follow from Theorems \ref{thm-1.3}-\ref{thm-1.6} and from the following result:
\begin{lemma}\label{lem-7.1}
Let $\mathcal{A}$ be as in Theorem \ref{thm-1.8}. Let $p=-2\,\Gamma_{34}{}^3$ and
$q=-2\,\Gamma_{34}{}^4$.
\begin{enumerate}
\item ${\rho_{\mathcal{A}}^a}=0$ $\Leftrightarrow$ $p_{/3}=q_{/4}$.
\item ${\rho_{\mathcal{A}}^s}=0$ $\Leftrightarrow$ $\rho_{\mathcal{A}}=0$ $\Leftrightarrow$
$\rho_{\mathcal{M}}=0$.
\end{enumerate}
\end{lemma}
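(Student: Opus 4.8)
The plan is to reduce both assertions to an explicit computation of the affine Ricci tensor $\rho_{\mathcal{A}}$ in the coordinates $(x_3,x_4)$ and then to compare the outcome with the Ricci operator $\rho_{\mathcal{M}}$ already computed in Section~\ref{sect-4}. First I would record the Christoffel symbols: under Equation (\ref{eqn-1.x3}) the only nonvanishing ones are $\Gamma_{34}{}^3=\Gamma_{43}{}^3=-\frac12 p$ and $\Gamma_{34}{}^4=\Gamma_{43}{}^4=-\frac12 q$, where $p=-2\Gamma_{34}{}^3$ and $q=-2\Gamma_{34}{}^4$ depend only on $(x_3,x_4)$; this also puts $g_{34}=x_1p+x_2q+\xi_{34}$ into the form of Equation (\ref{eqn-1.b}), so the Section~\ref{sect-4} computation of $\rho_{\mathcal{M}}$ applies with $s=\xi_{34}$. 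Feeding the Christoffel symbols into the two-dimensional formula $\rho_{\mathcal{A}}(\partial_a,\partial_b)=\sum_c R_{cab}{}^c$, with $R_{ijk}{}^l=\partial_i\Gamma_{jk}{}^l-\partial_j\Gamma_{ik}{}^l+\Gamma_{im}{}^l\Gamma_{jk}{}^m-\Gamma_{jm}{}^l\Gamma_{ik}{}^m$, yields after a short calculation
\[
\rho_{\mathcal{A}}=\begin{pmatrix}\tfrac12 q_{/3}-\tfrac14 q^2 & \tfrac14 pq-\tfrac12 q_{/4}\\ \tfrac14 pq-\tfrac12 p_{/3} & \tfrac12 p_{/4}-\tfrac14 p^2\end{pmatrix},
\]
the rows and columns being indexed by $3,4$. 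Every later step reads off information from this $2\times2$ matrix.

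For Assertion (1) I would simply isolate the antisymmetric part. Its single independent entry is $\rho_{\mathcal{A},34}^{a}=\frac12(\rho_{\mathcal{A},34}-\rho_{\mathcal{A},43})=\frac14(p_{/3}-q_{/4})$, so ${\rho_{\mathcal{A}}^a}=0$ if and only if $p_{/3}=q_{/4}$, which is exactly the claim. This part is immediate once the matrix is in hand.

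For Assertion (2) I would argue as follows. The symmetric part vanishes precisely when the three scalar equations $q^2=2q_{/3}$, $p^2=2p_{/4}$, and $pq=p_{/3}+q_{/4}$ hold. Comparing with the matrix for $\rho_{\mathcal{M}}$ obtained in Section~\ref{sect-4}, whose nonzero entries are $-\frac12 q^2+q_{/3}$, $-\frac12 p^2+p_{/4}$, and $\frac12(pq-q_{/4}-p_{/3})$, one sees that $\rho_{\mathcal{M}}=0$ is governed by the very same three equations; hence ${\rho_{\mathcal{A}}^s}=0\Leftrightarrow\rho_{\mathcal{M}}=0$ is immediate. It remains to upgrade ${\rho_{\mathcal{A}}^s}=0$ to $\rho_{\mathcal{A}}=0$, i.e.\ to show that the antisymmetric part also vanishes. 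Here the three equations are precisely condition (1) of Lemma~\ref{lem-1.4}, which is equivalent to its condition (2); the latter contains the relation $p_{/3}=q_{/4}=\frac12 pq$, so in particular $p_{/3}=q_{/4}$. By Assertion (1) this forces ${\rho_{\mathcal{A}}^a}=0$, so $\rho_{\mathcal{A}}$ vanishes identically. The reverse implication $\rho_{\mathcal{A}}=0\Rightarrow{\rho_{\mathcal{A}}^s}=0$ is trivial, which closes the chain of equivalences.

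The main obstacle is not the curvature computation, which is routine, but the step ${\rho_{\mathcal{A}}^s}=0\Rightarrow\rho_{\mathcal{A}}=0$: the symmetric equations alone do not visibly contain the relation $p_{/3}=q_{/4}$, and without it the antisymmetric part of $\rho_{\mathcal{A}}$ need not vanish. The key point is that this gap is closed exactly by the rigidity of Lemma~\ref{lem-1.4}, whose passage from condition (1) to condition (2) supplies the missing relation. Recognizing that Lemma~\ref{lem-1.4} is the tool linking the symmetric conditions to the vanishing of the full, a priori non-symmetric, affine Ricci tensor is the crux of the argument.
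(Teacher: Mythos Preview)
Your proposal is correct and follows essentially the same route as the paper: both compute the affine Ricci tensor directly from the Christoffel symbols, obtain the same $2\times2$ matrix, read off Assertion~(1) from the off-diagonal asymmetry, and invoke Lemma~\ref{lem-1.4} to pass from ${\rho_{\mathcal{A}}^s}=0$ to $\rho_{\mathcal{A}}=0$. You make the link to $\rho_{\mathcal{M}}$ slightly more explicit by comparing with the matrix in Section~\ref{sect-4}, but this is the same argument the paper leaves implicit via Theorem~\ref{thm-1.3}(3).
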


\begin{proof}Let $\nabla$ be a torsion free connection on $\mathbb{R}^2$ with non-zero Christoffel symbols
$$\nabla_{\partial_{x_3}}\partial_{x_4}=\nabla_{\partial_{x_4}}\partial_{x_3}
 =-\textstyle\frac12p(x_3,x_4)\partial_{x_3}-\textstyle\frac12q(x_3,x_4)\partial_{x_4}\,.$$
We compute:
\begin{eqnarray*}
&&\mathcal{R}_{\mathcal{A}}(\partial_{x_3},\partial_{x_4})\partial_{x_3}
  =\textstyle\nabla_{\partial_{x_3}}(-\frac12p\partial_{x_3}-\frac12q\partial_{x_4})\\
&&\qquad=\textstyle
  -\frac12p_{/3}\partial_{x_3}-\frac12q_{/3}\partial_{x_4}-\frac12q\nabla_{\partial_{x_3}}\partial_{x_4}\\
&&\qquad=\textstyle(\frac14pq-\frac12p_{/3})\partial_{x_3}+(\frac14q^2-\frac12q_{/3})\partial_{x_4},\\
&&\mathcal{R}_{\mathcal{A}}(\partial_{x_3},\partial_{x_4})\partial_{x_4}
   =\textstyle(\frac12p_{/4}-\frac14p^2)\partial_{x_3}+(\frac12q_{/4}-\frac14pq)\partial_{x_4}\,.
\end{eqnarray*}
The Ricci tensor is then given by:
$$\begin{array}{ll}
\rho_{\mathcal{A}}(\partial_{x_3},\partial_{x_3})=\frac12q_{/3}-\frac14q^2,&
\rho_{\mathcal{A}}(\partial_{x_3},\partial_{x_4})=\frac14pq-\frac12q_{/4},\\
\rho_{\mathcal{A}}(\partial_{x_4},\partial_{x_3})=\frac14pq-\frac12p_{/3},&
\rho_{\mathcal{A}}(\partial_{x_4},\partial_{x_4})=\frac12p_{/4}-\frac14p^2\,.\vphantom{\vrule height
11pt}
\end{array}$$
It now follows that $\rho_{\mathcal{A}}$ is symmetric $\Leftrightarrow$ $q_{/4}=p_{/3}$;
$\rho_{\mathcal{A}}$ is anti-symmetric $\Leftrightarrow$ $2q_{/3}=q^2$, $2p_{/4}=p^2$, and
$pq=q_{/4}+p_{/3}$. Finally $\rho_{\mathcal{A}}=0$ $\Leftrightarrow$ $q^2=2q_{/3}$, $p^2=2p_{/4}$, and
$p_{/3}=q_{/4}=\frac12pq$. Lemma \ref{lem-7.1} now follows from Lemma \ref{lem-1.4}.\end{proof}

\appendix
\section{A technical Lemma in PDE's -- the proof of Lemma \ref{lem-1.4}}\label{sect-2}
If $0\ne(a_0,a_3,a_4)$, set
\begin{equation}\label{eqn-2.a}
p:=-2a_4(a_0+a_3x_3+a_4x_4)^{-1}\quad\text{and}\quad q=-2a_3(a_0+a_3x_3+a_4x_4)^{-1}\,.
\end{equation} We note
that if $\lambda\ne0$, then $(\lambda a_0,\lambda a_3,\lambda a_4)$ and $(a_0,a_3,a_4)$ determine
the same functions $p$ and $q$ in Equation (\ref{eqn-2.a}). Thus we may regard $(a_0,a_3,a_4)$ as
belonging to the real projective space
$\mathbb{RP}^2:=\{\mathbb{R}^3-\{0\}\}/\{\mathbb{R}-\{0\}\}$. If $a_4=0$, then $p=0$; if $a_3=0$,
then $q=0$.

Clearly Condition (3) of Lemma \ref{lem-1.4} implies Condition (2) of Lemma \ref{lem-1.4} and
Condition (2) of Lemma \ref{lem-1.4} implies Condition (1) of Lemma \ref{lem-1.4}. Thus we must
show that if $p^2=2p_{/4}$, if $q^2=2q_{/3}$, and if $pq=p_{/3}+q_{/4}$, then $p$ and $q$ have the
form given in Equation (\ref{eqn-2.a}). Set
\begin{eqnarray*}
&&\mathcal{O}_p:=\{(x_1,x_2,x_3,x_4)\in\mathcal{O}:p(x_3,x_4)\ne0\},\\
&&\mathcal{O}_q:=\{(x_1,x_2,x_3,x_4)\in\mathcal{O}:q(x_3,x_4)\ne0\}\,.
\end{eqnarray*}
We suppose first that $\mathcal{O}_p\cap\mathcal{O}_q$ is non-empty. Let $B$ be a closed ball in
$\mathbb{R}^4$ with non-empty interior which is contained in $\mathcal{O}$ and which has
$\operatorname{int}(B)\subset\mathcal{O}_p\cap\mathcal{O}_q$. We integrate the equation
$p^2=2p_{/4}$ on $\operatorname{int}(B)$ to express
\begin{equation}\label{eqn-2.b}
p(x_3,x_4)=-2(\xi(x_3)+x_4)^{-1}\quad\text{on}\quad\operatorname{int}(B)\,.
\end{equation}
We use the relation $pq=p_{/3}+q_{/4}$ to conclude
$$-2(\xi(x_3)+x_4)^{-1}q=2\dot\xi(x_3)(\xi(x_3)+x_4)^{-2}+q_{/4}(x_3,x_4)\,.$$
This relation can be written in the form $\{q(x_3,x_4)(\xi(x_3)+x_4)^2\}_{/4}=-2\dot\xi(x_3)$.
Consequently
\begin{equation}\label{eqn-2.c}
q(x_3,x_4)=\{\phi(x_3)-2\dot\xi(x_3)x_4\}(\xi(x_3)+x_4)^{-2}\,.
\end{equation}
We set $q^2=2q_{/3}$ and clear denominators to obtain the relation:
\begin{equation}\label{eqn-2.d}
\begin{array}{l}
\{\phi(x_3)-2\dot\xi(x_3)x_4\}^2
=2\{\dot\phi(x_3)-2\ddot\xi(x_3)x_4\}(\xi(x_3)+x_4)^2\\
\phantom{\{\phi(x_3)-2\dot\xi(x_3)x_4\}^2}-
2\{\phi(x_3)-2\dot\xi(x_3)x_4\}2\dot\xi(x_3)(\xi(x_3)+x_4)\,.
\end{array}\end{equation}
Setting the coefficient of $x_4^3$ equal to zero then yields $\ddot\xi(x_3)=0$ so
$\xi(x_3)=\alpha_0+\alpha_1x_3$ and Equation (\ref{eqn-2.d}) becomes:
\begin{equation}\label{eqn-2.e}
\begin{array}{l}
\{\phi(x_3)-2\alpha_1x_4\}^2=2\dot\phi(x_3)(\alpha_0+\alpha_1x_3+x_4)^2\\
\phantom{\{\phi(x_3)-2\alpha_1x_4\}^2}
-4\alpha_1(\phi(x_3)-2\alpha_1x_4)(\alpha_0+\alpha_1x_3+x_4)\,.
\end{array}\end{equation}
Examining the coefficient of $x_4^2$ in Equation (\ref{eqn-2.e}) shows that
$\dot\phi(x_3)=-2\alpha_1^2$ so $\phi(x_3)=\beta_0-2\alpha_1^2x_3$. Equation (\ref{eqn-2.e}) then
further simplifies to become:
\begin{equation}\label{eqn-2.f}
\begin{array}{l}
(\beta_0-2\alpha_1^2x_3-2\alpha_1x_4)^2
=-4\alpha_1^2(\alpha_0+\alpha_1x_3+x_4)^2\\
\phantom{(\beta_0-2\alpha_1^2x_3-2\alpha_1x_4)^2}
-4\alpha_1(\beta_0-2\alpha_1^2x_3-2\alpha_1x_4)(\alpha_0+\alpha_1x_3+x_4)\,.
\end{array}\end{equation}
This leads to the relation $\beta_0^2=-4\alpha_1^2\alpha_0^2-4\beta_0\alpha_1\alpha_0$ which
implies $\beta_0=-2\alpha_1\alpha_0$. Equations (\ref{eqn-2.b}) and (\ref{eqn-2.c}) now yield
\begin{equation}\label{eqn-2.g}
\begin{array}{l}
p(x_3,x_4)=-2(\alpha_0+\alpha_1x_3+x_4)^{-1},\\
q(x_3,x_4)=-2\alpha_1(\alpha_0+\alpha_1x_3+x_4)^{-1}\,.\vphantom{\vrule height 11pt}
\end{array}\end{equation}
By continuity, Equations (\ref{eqn-2.g}) hold  on the closed ball $B$  and in particular $p$ and
$q$ do not vanish on $B$. It now follows that $\mathcal{O}=\mathcal{O}_p=\mathcal{O}_q$. Analytic
continuation now shows $p$ and $q$ are given by Equation (\ref{eqn-2.g}) on all of $\mathcal{O}$
and thus Assertion (3) holds.

We therefore assume $\mathcal{O}_p\cap\mathcal{O}_q$ is empty. If $\mathcal{O}_p$ and
$\mathcal{O}_q$ are both empty, then $p=q=0$ and we may take $(a_0,a_3,a_4)=(1,0,0)$ to obtain a
representation of the form given in (3). We therefore assume $\mathcal{O}_q$ is non-empty; the
case $\mathcal{O}_p$ is non-empty is handled similarly. Let $B$ be a closed ball in $\mathbb{R}^4$
with non-empty interior which is contained in $\mathcal{O}$ and which satisfies
$\operatorname{int}(B)\subset\mathcal{O}_q$. We integrate the equation $q^2=2q_{/3}$ to express
$$q=-2(x_3+\eta(x_4))^{-1}\quad\text{on}\quad\operatorname{int}(B)\,.$$
Since $pq=p_{/3}+q_{/4}$ and since $p=0$ on $\operatorname{int}(B)$, we have $\dot\eta=0$ and
hence
\begin{equation}\label{eqn-2.h}
q=-2(x_3+a)^{-1}\quad\text{on}\quad\operatorname{int}(B)\,.
\end{equation}
Again, by continuity, this representation holds on all of $B$ and thus $q$ is non-zero on $B$.
Thus $\mathcal{O}=\mathcal{O}_q$, Equation (\ref{eqn-2.h}) holds on all of $\mathcal{O}$, and
$p=0$ on all of $\mathcal{O}$. This again obtains a representation for $p$ and $q$ of the form
given in Assertion (3).
  \hfill\qedbox

\section*{Acknowledgments} Research of M. Brozos-V\'azquez and of P. Gilkey
was partially supported by the Max Planck Institute for Mathematics in the Sciences (Germany)  and
by Project MTM2006-01432 (Spain). Research of E. Garc\'{\i}a--R\'{\i}o and of R. V\'azquez-Lorenzo was
partially supported by PGIDIT06PXIB207054PR (Spain).  We are grateful for helpful suggestions from the referees.


\begin{thebibliography}{99}

\bibitem{AD}
B. E. Abdalla, F. Dillen, ``A Ricci-semi-symmetric hypersurface of
Euclidean space which is not semi-symmetric'', \emph{Proc. Amer.
Math. Soc.} \textbf{130} (2001), 1805--1808.

\bibitem{ABRS}
 R. Abounasr, A. Belhaj, J. Rasmussen, E. H. Saidi,
 ``Superstring theory on pp waves with $ADE$ geometries'',
 \emph{J. Phys. A} \textbf{39} (2006), 2797--2841.

\bibitem{ABP05}
V. C. de Andrade, A. L. Barbosa, J. G. Pereira,
``Gravitation and duality symmetry'',
{\it Internat. J. Modern Phys. D}
\textbf{14} (2005), 1635--1648.

\bibitem{BS}
 E. Bergshoeff, E. Sezgin,
 ``Self-dual supergravity theories in $2+2$ dimensions'',
 \emph{Phys. Lett. B} \textbf{292} (1992),  87--92.

\bibitem{BGVa05}
M. Brozos-V\'azquez, E. Garc\'{\i}a--R\'{\i}o,  R. V\'azquez-Lorenzo, ``Conformal Osserman four-dimensional
manifolds whose conformal Jacobi operators have complex eigenvalues", {\it Proc. Royal Society A}
\textbf{462} (2006), 1425--1441.

\bibitem{BGGV06}
M. Brozos-V\'azquez, P. Gilkey, E. Garc\'{\i}a--R\'{\i}o,  R. V\'azquez-Lorenzo, ``Completeness, Ricci
blowup, the Osserman and the conformal
 Osserman condition for Walker signature $(2,2)$ manifolds", preprint
math.DG/0611279, to appear in Proceedings of XV International Workshop on Geometry and Physics

\bibitem{BGa05}
M. Brozos-V\'azquez,  P. Gilkey, ``Manifolds with Commuting Jacobi Operators", \emph{J. Geom.}, to
appear.

\bibitem{BGb06}
M. Brozos-V\'azquez, P. Gilkey, ``Pseudo-Riemannian manifolds with Commuting Jacobi Operators",
\emph{Rend. Circ. Mat. Palermo}, Serie II, Tomo LV (2006),  163--174.

\bibitem{BGb07}
M. Brozos-V\'azquez, P. Gilkey,
``The global geometry of Riemannian manifolds with commuting curvature operators",
\emph{J. Fixed Point Theory Appl.} \textbf{1} (2007), 87--96.

\bibitem{BGNb05}
M. Brozos-V\'azquez, P. Gilkey,  S. Nik\v cevi\'c, ``Jacobi--Tsankov manifolds which are
not
$2$-step nilpotent", {\it Contemporary Geometry and Related Topics}, Belgrade (2005), 63--80.

\bibitem{BKV}
E. Boeckx, O. Kowalski, L. Vanhecke,
``\emph{Riemannian manifolds of conullity two}", World Sci. Publ. Co., Inc., River Edge, NJ, 1996.


\bibitem{Calvaruso}
 G. Calvaruso,
 ``Homogeneous structures on three-dimensional Lorentzian manifolds'',
 \emph{J. Geom. Phys.} \textbf{57} (2007), 1279-1291.



\bibitem{CGM05}
M. Chaichi, E. Garc\'{\i}a-R\'{\i}o,  Y. Matsushita, ``Curvature properties of four-dimensional Walker
metrics'', {\it Classical Quantum Gravity} \textbf{22} (2005), 559--577.

\bibitem{CGRVA}
 M. Chaichi, E. Garc\'{\i}a-R\'{\i}o, M. E. V\'{a}zquez-Abal,
 ``Three-dimensional Lorentz manifolds admitting a parallel null vector field'',
 \emph{J. Phys. A} \textbf{38} (2005),  841--850.

\bibitem{DM}
 J. Davidov, O. Mu\v skarov,
 ``Self-dual Walker metrics with a two-step nilpotent Ricci operator'',
 \emph{J. Geom. Phys.} \textbf{57} (2006), 157--165.

\bibitem{DGV06}
 J. C. D\'{\i}az-Ramos, E. Garc\'{\i}a-R\'{\i}o,  R. V\'azquez-Lorenzo,
 ``New examples of Osserman metrics with  nondiagonalizable Jacobi operators'',
 {\it Differential Geom. Appl.} \textbf{24} (2006), 433--442.

\bibitem{DGV2}
 J. C. D\'{\i}az-Ramos, E. Garc\'{\i}a-R\'{\i}o,  R. V\'azquez-Lorenzo,
 ``Four-dimensional Osserman metrics with nondiagonalizable Jacobi operators'',
 \emph{J. Geom. Anal.} \textbf{16} (2006), 39--52.

\bibitem{GVV99}
E. Garc\'ia-R\'io, D. Kupeli, M. E. V\'azquez-Abal,  R. V\'azquez-Lorenzo, ``Affine Osserman
connections and their Riemann extensions'', {\it Differential Geom. Appl.}
\textbf{11} (1999),
145--153.

\bibitem{GaVa}
E. Garc\'ia-R\'io,  R. V\'azquez-Lorenzo, ``Four-dimensional Osserman symmetric spaces'',
\emph{Geom. Dedicata}
\textbf{88} (2001),
147--151.

\bibitem{GN06}
 P. Gilkey,  S. Nik\v cevi\'c,
 ``Pseudo-Riemannian Jacobi--Tsankov Manifolds'', to appear Int. J. Methods in Geom. Physics.


\bibitem{H91}
G. S. Hall, ``Covariantly constant tensors and holonomy structure in general relativity'', J. Math.
Phys.  \textbf{32}  (1991),  181--187.

\bibitem{HS81}
G. S. Hall, J. da Costa, ``Affine collineations in space-time'',  J. Math. Phys.  \textbf{29}
(1988), 2465--2472.

\bibitem{HSW}
 P. S. Howe, E. Sezgin, P. C. West,
 ``The six-dimensional self-dual tensor'',
 \emph{Phys. Lett. B} \textbf{400} (1997), 255--259.

\bibitem{IZV}
    M. Ivanova, V. Videv, Z. Zhelev,
    ``Four dimensional Riemannian manifolds with commuting higher order Jacobi operators'',
    preprint math.DG/0701090v2.

\bibitem{K}
 J. Kerimo,
 ``AdS pp-waves'',
 \emph{J. High Energy Phys.} (2005), {\it no. 9}, 025, 18 pp. (electronic).

\bibitem{KNP}
 J. Kluso\v n, R. I. Nayak,  K. L. Panigrahi,
 ``D-brane dynamics in a plane wave background'',
 \emph{Phys. Rev. D (3)} \textbf{73} (2006), no. 6, 066007, 10 pp.

\bibitem{Leistner}
 Th. Leistner,
 ``Screen bundles of Lorentzian manifolds and some generalisations of pp-waves'',
 \emph{J. Geom. Phys.} \textbf{56} (2006), 2117--2134.

\bibitem{LPS}
 H. L\"{u}, C. N. Pope, E. Sezgin,
 ``A search for new $(2,2)$ strings'',
 \emph{Classical Quantum Gravity} \textbf{12} (1995), 1913--1918.

\bibitem{M05} Y. Matsushita,
``Walker 4-manifolds with proper almost complex structures",
{\it J. Geom. Phys.} \textbf{55} (2005), 385--398.

\bibitem{MW}
 J. Michelson, X. Wu,
 ``Dynamics of antimembranes in the maximally supersymmetric eleven-dimensional pp wave'',
 \emph{J. High Energy Phys.} (2006), {\it no. 1}, 028, 37 pp. (electronic).

\bibitem{Ortega}
M. Ortega, J. de Dios P\'{e}rez, Y. Jin Suh, ``Real hypersurfaces in quaternionic projective spaces
with commuting tangent Jacobi operators", \emph{Glasgow Math. J.} \textbf{45} (2003), 79--89.

\bibitem{PPCM}
 V. Pravda, A. Pravdov\'{a}, A. Coley, R. Milson,
 ``All spacetimes with vanishing curvature invariants'',
 \emph{Classical Quantum Gravity} \textbf{19} (2002),  6213--6236.

\bibitem{S1}
G. Stanilov, ``Curvature operators based on the skew-symmetric curvature operator and their place
in the Differential Geometry'', preprint.

\bibitem{S2}
G. Stanilov, ``Higher order skew-symmetric and symmetric curvature operators'', C. R. Acad. Bulg.
Sci. \textbf{57} (2004), 9-13.

 \bibitem{Ts05}
 V. Tsankov,
 ``A characterization of $n$-dimensional hypersurface in $\mathbb{R}^{n+1}$ with
 commuting curvature operators", {\it Banach Center Publ.} \textbf{69} (2005),
205--209.

\bibitem{W} A. G. Walker,
``Canonical form for a Riemannian space with a parallel field of null
planes'', {\it Quart. J. Math Oxford} \textbf{2} (1950), 69--79.

\bibitem{YI}
K. Yano, S. Ishihara,
``\emph{Tangent and cotangent bundles}",
Marcel Dekker, New York, 1973.

\end{thebibliography}
\end{document}